\numberwithin{equation}{section}
\newtheorem{proposition}{Proposition}[section]
\newtheorem{theorem}{Theorem}[section]
\newtheorem{lemma}{Lemma}[section]
\newtheorem{definition}{Definition}[section]
\newtheorem{remark}{Remark}[section]
\newtheorem*{theorem*}{Theorem}
\numberwithin{equation}{section}
\DeclareMathOperator\init{init}
\DeclareMathOperator\myin{in}
\newcommand{\p}{\partial}
\newcommand{\beq}{\begin{equation}}
\newcommand{\eeq}{\end{equation}}
\newcommand{\pP}{\mathcal{P}}
\newcommand{\bueq}{\begin{equation*}}
\newcommand{\eueq}{\end{equation*}}
\newcommand{\bthm}{\begin{theorem}}
	\newcommand{\ethm}{\end{theorem}}
\newcommand{\eps}{\epsilon}
\providecommand{\keywords}[1]
{
	\small	
	\textbf{{Keywords:}} #1
}
\providecommand{\msc}[1]
{
	\small	
	\textbf{{Mathematics Subject Classification:}} #1
}
\title{A voltage-conductance kinetic system from neuroscience: probabilistic reformulation and exponential ergodicity }%
\author{Xu'an Dou\thanks{Beijing International Center for Mathematical Research, Peking University, Beijing, 100871, China (dxa@pku.edu.cn)}\quad\quad Fanhao Kong\thanks{School of Mathematical Sciences, Peking University, Beijing, 100871, China (fanhaokong@pku.edu.cn)}\quad\quad Weijun Xu\thanks{Beijing International Center for Mathematical Research, Peking University, Beijing, 100871, China (weijunxu@bicmr.pku.edu.cn)}\quad\quad   Zhennan Zhou\thanks{Beijing International Center for Mathematical Research, Peking University, Beijing, 100871, China (zhennan@bicmr.pku.edu.cn).}}
\date{\today}
\begin{document}
\maketitle
\begin{abstract}
The voltage-conductance kinetic equation for an ensemble of neurons has been studied by many scientists and mathematicians, while its rigorous analysis is still at a premature stage. In this work, we obtain for the first time the exponential convergence to the steady state of this kinetic model in the linear setting. Our proof is based on a probabilistic reformulation, which allows us to investigate microscopic trajectories and bypass the difficulties raised by the special velocity field and boundary conditions in the macroscopic equation. We construct an associated stochastic process, for which proving the minorization condition becomes tractable, and the exponential ergodicity is then proved using Harris' theorem. 
\end{abstract}

\keywords{integrate-and-fire neurons, voltage-conductance model, long time behavior, Harris' theorem, hypocoercivity} 

\msc{35B40, 35Q84, 35Q92, 37A25, 92B20}







\section{Introduction}

We consider a kinetic equation from neuroscience, which is referred to as the voltage-conductance equation. It describes an ensemble of neurons via  $p(t,v,g)$, the probability density to find a neuron with voltage $v\in(V_R,V_F)$ and conductance $g\in(0,+\infty)$ at time $t$. Its evolution is governed by
	\begin{equation}\label{eq:FP-PDE}
	    \p_tp+\p_v(J(v,g)p)=\p_g((g-g_{\myin})p)+a\p_{gg}p,\quad t>0,v\in(V_R,V_F),g>0.
	\end{equation}
This model was originally proposed in \cite{cai2004effective,cai2006kinetic} and it has many successful applications in neuroscience \cite{cai2004effective,Cai2004embeded,cai2006kinetic,rangan2006maximum,rangan2007numerical,kovavcivc2009fokker,Cai2012307} as a computational tool or a theoretical framework. Nevertheless, its mathematical study is still at a premature stage due to its specific structures, which we shall elaborate as follows.

In $g$-direction, the equation \eqref{eq:FP-PDE} has a usual Fokker-Planck operator for the Ornstein-Uhlenbeck (OU) process on the half line, with the no-flux or reflective boundary condition at $g=0$: 
\begin{equation}\label{bc-g-pde}
        (g-g_{\myin})p+a\p_gp=0,\qquad g=0\;, \; \; v\in(V_R,V_F)\;, \; \; t>0
\end{equation} for mass conservation. Here, the parameters $g_{\myin}>0$ and $a>0$ are taken to be constants for simplicity. 

In $v$-direction, the equation structure is more complicated. The velocity field $J$ is given by
	\begin{equation}\label{eq:velocity-v}
	    J(v,g)=g_L(V_R-v)+g(V_E-v),
	\end{equation}
	where the fixed parameters satisfy
	\begin{equation}
	    	    g_L>0\;, \qquad V_E>V_F>V_R\;.
	\end{equation}
In \eqref{eq:velocity-v}, the first term  $g_L(V_R-v)$ models the leaky effect, which drives the voltage to a lower value $V_R$. Here $g_L>0$ is the constant leaky conductance. The second term $g(V_E-v)$ drives the voltage to the firing potential $V_F$. Here $V_E > V_F$ is the excitatory reversal potential. In other words, the velocity field manifests a competition between two mechanisms, where the strength of the former is fixed to be $g_L$, and the strength of the latter is depicted by the conductance variable $g$.

    A neuron spikes when its voltage reaches $V_F$, which happens only if $J(V_F,g)>0$, or equivalently $g>g_F:=g_L({V_F-V_R})/({V_E-V_F})$. In this case, the voltage is immediately reset to $V_R$ after the spike. This leads to the boundary condition
    \begin{equation}\label{bc-v-2-pde}
        J(V_R,g) \, p(t,V_R,g) = J(V_F,g) \, p(t,V_F,g), \quad g>g_F\;,
    \end{equation}
    matching the fluxes at $v=V_R$ and $v=V_F$. On the other hand, for $0<g\leq g_F$, we have $J(V_R,g)>0$ and $J(V_F,g)\leq 0$. So we impose the Dirichlet boundary condition
\begin{equation}\label{bc-v-1-pde}
    0=p(t,V_R,g)=p(t,V_F,g),\quad g\leq g_F.
\end{equation} Note that the flux equality \eqref{bc-v-2-pde} actually holds for all $g>0$, but the boundary condition changes type as $J(V_F,g)$ changes sign.     See Figure~\ref{fig:J-bc} for an illustration.

    \begin{figure}[htbp]
        \centering        \includegraphics[width=0.5\linewidth]{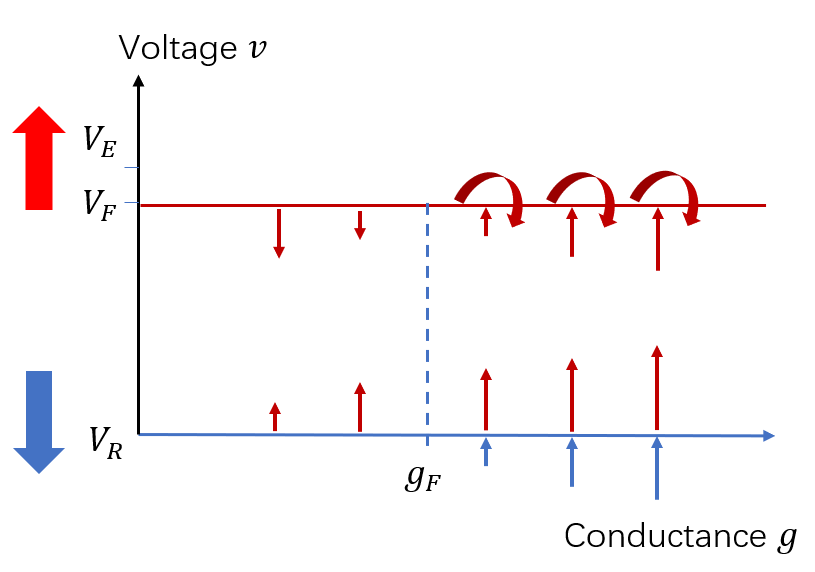}
        \caption{\footnotesize Illustration of the velocity field $J(v,g)$ and the boundary condition in $v$. { Red arrow} represents the velocity field $J$. {Blue arrow} indicates that the voltage $v$ is reset to $V_R$ after its arrival at $V_F$ in $\{g>g_F\}$. For $g\leq g_F$, the zero Dirichlet boundary condition is imposed.}
        \label{fig:J-bc}
    \end{figure}
We complement the system with an initial condition
\begin{equation}\label{ic-pde}
p(0,v,g)=p_{\init}(v,g),
\end{equation} which is assumed to be a probability density. The boundary conditions \eqref{bc-g-pde},\eqref{bc-v-2-pde} and \eqref{bc-v-1-pde} ensures that the total mass of $p$ is conserved by the dynamics. 

We refer to \cite{cai2006kinetic} for a comprehensive biological interpretation of the system. Note that taking $g_{\myin}>0$ and $a>0$ to be fixed constants simplifies the equation to the linear case. In more general situations, the two parameters can depend on time and even the solution $p$ in a nonlinear way. The nonlinearity can give rise to diverse phenomena, such as the periodic solution observed numerically in \cite{caceres2011numerical}. Nevertheless, even in the linear setting, the velocity field and the boundary condition in $v$ already brings difficulties in rigorous analysis. 

In a pioneering work (\cite{perthame2013voltage}) in the mathematical analysis of the voltage-conductance equation \eqref{eq:FP-PDE}, the authors study the steady state problem and derive several global bounds for the dynamical problem. Subsequent developments can be classified into two branches: either directly investigating the linear structure of \eqref{eq:FP-PDE} \cite{dpsz2022dcds,dou2022exponential}, or studying a nonlinear but simplified model \cite{perthame2018derivation,kim2021fast,CDZ2022}. In particular, several asymptotic limits and limiting models are studied in \cite{perthame2018derivation,kim2021fast}, and \cite{CDZ2022} fully characterizes the long time behavior of a variant model with motivations to understand the emergence of the periodic solution in the original model. In all these works above, the dynamics in $v$, in particular the boundary conditions \eqref{bc-v-2-pde}-\eqref{bc-v-1-pde} are simplified. To exemplify the difficulty of the original equation even in the linear case, we note that since the $L^{(8/7)^{-}}$ integrability of the steady state was obtained in \cite{perthame2013voltage}, the integrability index had not been improved until the recent $L^{\infty}$ estimate \cite{dpsz2022dcds}.

The primary goal of this paper is to explore the exponential convergence to the steady state for the linear equation \eqref{eq:FP-PDE}, which has been an open question since \cite{perthame2013voltage}. In \cite{perthame2013voltage}, only the convergence for the $g$-marginal is shown. Recently, qualitative convergence for the full solution $p(t,v,g)$ is established in \cite{dpsz2022dcds} via a compactness method without a convergence rate. A simplified model is proposed and studied in \cite{dou2022exponential} via the resolvent estimate. To the best of our knowledge, whether the solution of \eqref{eq:FP-PDE} converges to the steady state exponentially fast is not known before. 

To illustrate the difficulty, we note that the diffusion in \eqref{eq:FP-PDE} appears only in $g$ direction, while in $v$ direction there is only the convection term $\p_v(J(v,g)p)$. The situation resembles the classical kinetic Fokker-Planck equation, where the voltage $v$ is like the position variable, and the conductance $g$ is like the velocity variable. To prove convergence to the equilibrium, one needs to exploit the interaction between the convection in $v$ and the diffusion in $g$, which may ``pass'' the dissipation from the $g$ direction to the $v$ direction. This is of course not a new challenge, as it appears in the analysis of many classical kinetic models, known as hypocoercivity \cite{villani2009hypocoercivity}. Indeed, various hypocoercivity methods have been developed in the literature to address the convergence to the equilibrium for kinetic models ~\cite{villani2009hypocoercivity,SebastienGadat2013KRM,dolbeault2015hypocoercivity,baudoin2017bakry}. However, it seems difficult to apply many existing hypocoercivity methods to \eqref{eq:FP-PDE} due to its specific structures.  

The equation \eqref{eq:FP-PDE} is distinguished from many classical kinetic models by the velocity field $J(v,g)$ and the boundary conditions \eqref{bc-v-2-pde}-\eqref{bc-v-1-pde} in $v$. As defined in \eqref{eq:velocity-v}, the velocity field $J(v,g)$ can not be written in a separable form like $f(v)h(g)$, which results in several difficulties. Most importantly, it causes the steady state not to have a local equilibrium structure. To be more specific, the steady state $p_{\infty}(v,g)$ can not be written as a separable form $\theta(v)M(g)$, where $M(g)$ is in the kernel of the Fokker-Planck operator in $g$ (sometimes known as the local Maxwellian in classical kinetic models). Therefore, hypocoercivity methods based on the local equilibrium could not be applied to \eqref{eq:FP-PDE}. 

Closely related to $J(v,g)$, the boundary conditions in $v$ (\eqref{bc-v-2-pde} and \eqref{bc-v-1-pde}) also raise many difficulties. They make existing hypercoercivity methods inapplicable, which are designed only for problems in the whole space or on periodic domains. 

We remark that these difficulties not only appear at the technical level, but are also essential to the dynamics, e.g., to the emergence of periodic solutions in the nonlinear case. Indeed, once $J(v,g)$ is simplified to $g(V_E-v)$ (i.e. the leaky mechanism is removed and the boundary conditions are simplified accordingly), \cite{CDZ2022} excludes the possibility of having periodic solutions in the resulting nonlinear but simplified model. Hence, investigating the original linear equation \eqref{eq:FP-PDE} plays an important role in understanding the various intriguing dynamics in the nonlinear case.

In this work, we aim to prove the exponential convergence to the steady state of the voltage-conductance equation \eqref{eq:FP-PDE} based on a probabilistic reformulation. The strength of the probabilistic approach lies in leveraging the ergodicity property of the microscopic trajectory, which helps to bypass the above-mentioned difficulties when applying a macroscopic PDE method.

More precisely, we construct a stochastic process which can be viewed as a microscopic description of \eqref{eq:FP-PDE}. The probability density of this process gives a weak solution to \eqref{eq:FP-PDE}-\eqref{ic-pde}, analogous to the classical link between a stochastic differential equation and its Fokker-Planck equation. For the process we construct, we will verify the conditions of Harris' theorem, which implies the exponential convergence of its density to the steady state. Our main result is the following. 

\begin{theorem}[Main result]
    For the stochastic process constructed in \eqref{sde-v}-\eqref{ic-sde}, its law $(\mu_t)_{t\geq0}$ is a weak solution to \eqref{eq:FP-PDE}-\eqref{ic-pde} as defined in Definition \ref{def:weak-sol}. Moreover, we have the exponential convergence
   \begin{equation}
       \|\mu_t-\pi\|_{\beta}\leq Ce^{-\lambda t}\|\mu_0-\pi\|_{\beta},\quad t\geq0,
   \end{equation}  to the unique invariant measure $\pi$, where $\|\cdot\|_{\beta}$ is a weighted total variation norm defined in  \eqref{e:norm}, and $C$, $\lambda$ and $\beta$ are positive constants independent of initial data $\mu_0$. 
\end{theorem}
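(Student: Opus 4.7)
The plan is to apply Harris' ergodic theorem to the Markov process $(v_t,g_t)_{t\geq 0}$ defined by \eqref{sde-v}-\eqref{ic-sde}. Three tasks have to be carried out: (i) identify the law $\mu_t$ as a weak solution of \eqref{eq:FP-PDE}-\eqref{ic-pde} in the sense of Definition \ref{def:weak-sol}; (ii) establish a Lyapunov/drift condition $\mathcal{L}W\leq -cW+K$ for some function $W\geq 1$; (iii) prove a Doeblin-type minorization condition on the sublevel set $\{W\leq R\}$ for $R$ large enough. The main obstacle lies in (iii), since the dynamics in $v$ is purely deterministic given $g$, so the smoothing has to be transferred from the noise in the conductance to the voltage variable through the coupling provided by $J(v,g)$ and by the reset.

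For (i), the process is built by letting $g_t$ follow an Ornstein--Uhlenbeck diffusion reflected at $0$ and letting $v_t$ solve the ODE $\dot v_t=J(v_t,g_t)$ between firing times, with instantaneous reset $v_t\mapsto V_R$ whenever $v_t$ reaches $V_F$. Applying It\^o's formula to a smooth test function $\varphi(v,g)$ and tracking the contributions from the reflection at $g=0$ and the jumps at $v=V_F$ produces the weak formulation of \eqref{eq:FP-PDE}: the reflective term vanishes thanks to the no-flux boundary condition \eqref{bc-g-pde}, while the resets exactly reproduce the flux-matching condition \eqref{bc-v-2-pde}, and in the non-firing regime $0<g\leq g_F$ the voltage never reaches $V_F$ so that \eqref{bc-v-1-pde} holds automatically.

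For (ii), since $v$ stays in the bounded interval $[V_R,V_F]$, the generator essentially reduces on functions of $g$ alone to a reflected OU operator, and a natural candidate is $W(v,g)=1+g^2$ or, if exponential tails are needed to match the weight $\|\cdot\|_\beta$, the weight $W(v,g)=e^{\beta g}$. A direct computation yields the geometric drift $\mathcal{L}W\leq -cW+K$ with $c,K>0$, which integrates to the semigroup estimate $\mathbb{E}_{(v,g)}W(v_t,g_t)\leq e^{-ct}W(v,g)+K/c$ required by Harris' theorem.

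For (iii), the strategy is a two-step controllability-plus-density argument. First, one shows that for any starting point $(v_0,g_0)\in\{W\leq R\}$ there is positive probability that, by a fixed time $t_0$, the process has entered the firing regime $g>g_F$, fired at least once so that the voltage has been reset to $V_R$, and ended in a fixed reference box $K_*\subset(V_R,V_F)\times(0,\infty)$. This is an approximate-controllability statement that uses the Brownian noise to steer $g$ together with the deterministic transport by $J$ in $v$, and it crucially exploits the reset mechanism, which synchronises all trajectories to $V_R$ and thereby erases the memory of $v_0$. Second, one establishes a uniform lower bound on the transition density at time $t_0$ restricted to $K_*$, via a Girsanov change of measure on the $g$-noise or a parametrix/heat-kernel comparison for the reflected OU coupled with the flow of $J$. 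These combine into the Doeblin bound $P_{t_0}((v_0,g_0),\cdot)\geq\alpha\nu(\cdot)$ uniformly on $\{W\leq R\}$. Once (i)-(iii) are in place, the continuous-time version of Harris' theorem yields, after choosing $\beta$ small enough to balance the drift and minorization constants, both the uniqueness of the invariant measure $\pi$ and the geometric contraction $\|\mu_t-\pi\|_\beta\leq Ce^{-\lambda t}\|\mu_0-\pi\|_\beta$ claimed in the theorem.
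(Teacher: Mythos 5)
Your high-level plan — Harris' theorem with a Lyapunov bound in $g$ plus a small-set minorization — matches the paper's (Theorem~\ref{thm:harris}, via Lemma~\ref{lem:Lyapunov} and Proposition~\ref{lem:minorisation}), and steps (i) and (ii) are essentially what the paper does (the paper uses $W(v,g)=(g-1)^2$ and works with the time-$T$ chain rather than the generator, but these are cosmetic differences). The genuine divergence, and the gap, is in step (iii).

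Your minorization strategy rests on the claim that the spike-reset mechanism ``synchronises all trajectories to $V_R$ and thereby erases the memory of $v_0$.'' This does not hold at a fixed time. If you force $G_t$ to stay in the firing regime $g>g_F$ on $[0,t_0]$, the voltage follows the (nearly) deterministic flow of $J(\cdot,g)$, which is a fixed periodic sawtooth: starting from $v_0$, the first hit of $V_F$ occurs at a time $T(v_0)$ that depends continuously and monotonically on $v_0$, and thereafter the dynamics is periodic with period independent of $v_0$. Hence at the fixed time $t_0$ the voltage is at $\Phi_{(t_0-T(v_0))\bmod T_{\mathrm{cycle}}}(0)$, which sweeps through the whole interval $[V_R,V_F)$ as $v_0$ varies. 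The reset shifts the phase but does not contract the $v$-fiber, so it cannot by itself produce a uniform Doeblin lower bound. The paper's minorization (Figure~\ref{fig:strategy}, Lemmas~\ref{lem:ou_bound}--\ref{lem:bound_T_1}) instead uses the other feature of $J$: it has an attracting interior zero $(v^*,g^*)$ with $g^*<g_F$. Holding $G_t$ near $g^*$ makes $J$ point toward $v^*$ from both sides, so after a fixed time $T_3=1/(2J^*)$ the voltage is inside a small box $\mathcal{N}$ around $(v^*,g^*)$ \emph{uniformly} in $v_0$ — that is the genuine synchronization step, and it bypasses the reset entirely. From $\mathcal{N}$, a uniform density lower bound at time $T_2$ is then obtained via H\"ormander hypoellipticity for the absorbed process $\widetilde V$ together with a support theorem and continuity of the density (Lemma~\ref{lem:density_continuity_2}), rather than the Girsanov/parametrix route you suggest (which could also work once the synchronization step is fixed). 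To salvage your approach you would need to argue that the noise in $G_t$ randomises the firing time enough to smear $V_{t_0}$ over a fixed set uniformly in $(v_0,g_0)$, which is not obvious and is not what your sketch does; the paper's use of the stable zero of $J$ avoids this difficulty altogether.
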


Our probabilistic reformulation is indeed closely related to the stochastic process considered in \cite{cai2006kinetic}, which is the scientific motivation in deriving \eqref{eq:FP-PDE}. However, as far as we know, prior to our work the mathematical analysis on \eqref{eq:FP-PDE} in literature \cite{perthame2013voltage,perthame2018derivation,kim2021fast,dou2022exponential,dpsz2022dcds,CDZ2022} focuses on the PDE side and do not take a probabilistic perspective. Nevertheless, we note that both the PDE and the probabilistic aspects have been studied for some other neuron models, e.g. the NNLIF model \cite{caceres2011analysis,delarue2015global,liu2020rigorous}.

Harris' theorem is a classical tool for convergence of Markov processes \cite{harris1956existence,meyn1994computable,Hairer_Mattingly_Harris}. It gives sufficient conditions to obtain an exponential convergence. In particular, a minorization condition is needed, which roughly means a uniform lower bound for solutions with initial data in a given compact set. Recently, Harris' theorem has been successfully used for many kinetic equations from physics and biology (\cite{canizo2019hypocoercivity,cao2021kinetic,yoldacs2022quantitative,salort2022convergence}), where its conditions are verified by PDE estimates for the time-evolution semigroup. Nevertheless, it seems difficult to verify the minorization condition for \eqref{eq:FP-PDE} via a purely PDE argument, which motivates us to resort to the corresponding stochastic process. Note that exponential convergence for kinetic models has been studied from the stochastic process perspective in earlier works (\cite{mattingly2002ergodicity}). In our case, to obtain the minorization condition, special strategies are designed, making use of the velocity field structure and the jump mechanism of the stochastic process, where the latter is a microscopic reflection of the boundary conditions \eqref{bc-v-2-pde}-\eqref{bc-v-1-pde}.

The rest of this paper is arranged as follows. In Section \ref{sc:2}, we construct a stochastic process which we also establish its link to \eqref{eq:FP-PDE} in Section \ref{sc:3}. Section \ref{sc:4-convergence} is devoted to the proof of the exponential ergodicity for the stochastic process. { A summary and discussions on further direction are given in Section \ref{sc:5-conclusion}.} Appendix~\ref{sc:hormander} briefly introduces H\"{o}rmander's theorem.

To simplify the notations, in the rest of the paper, we fix the parameters
\begin{equation}\label{e:simplify-notations}
    V_R=0,\,V_F = 1,\quad a = g_{\text{in}} = 1.
\end{equation} Such a simplification is not essential for our analysis. Note that we keep $g_L(>0)$ and $V_E(>V_F=1)$ as unspecified parameters.

\section{Probabilistic formulation: an associated process}\label{sc:2}

To show the exponential convergence to the equilibrium of \eqref{eq:FP-PDE}, we first reformulate it from a probabilistic viewpoint. To this end, we construct a renewal type stochastic process whose Fokker-Planck equation is given by \eqref{eq:FP-PDE}-\eqref{ic-pde}. 

We denote the stochastic process by $(V_t,G_t)_{t\geq0}$, where $V_t$ corresponds to the voltage variable $v$ and $G_t$ corresponds to the conductance variable $g$. Its dynamics is constructed as the following stochastic differential equation with renewal in $V_t$.
\begin{align}\label{sde-v}
    dV_t&=J(V_t,G_t)dt,\qquad \text{when }V_t\in[0,1),\\
   \label{sde-bc-v} V_{t-}&=1,\quad \Rightarrow \; \text{set }V_t=0,\\
\label{sde-g}    dG_t&=-(G_t-1)dt+\sqrt{2}dB_t+d L_t,
\end{align}
with initial condition
\begin{equation}\label{ic-sde}
(V_0,G_0)\sim \mu_0,
\end{equation} where $\mu_0$ is a probability measure on the state space $\mathcal{X}$ defined by
\begin{equation}
	\label{e:x}
	    \mathcal{X}:=[0,1)\times[0,+\infty).
\end{equation}
In \eqref{sde-g}, the local time $\{L_t\}_{t\geq0}$ is the minimal non-decreasing process with $L_0=0$ which ensures $G_t\geq0$ for every $t\geq0$. Note that $L_t$ increases at time $t$ only if $G_t=0$, which corresponds to the reflective boundary at the origin. Since the term ``$dL_t$'' in \eqref{sde-g} does not take effect when $G_t>0$, $G_t$ just acts like an ordinary Ornstein-Uhlenbeck (OU) process on $\{t:G_t>0\}$. We conclude that \eqref{sde-g} gives an OU process on $\mathbb{R}^+$ with a reflective boundary condition at the origin (see e.g. \cite{bookMR0192521,ha2009applications}). Note that the evolution of $G_t$ does not depend on $V_t$. 

The dynamics of $V_t$ is governed by two mechanisms. When $V_t\in[0,1)$, \eqref{sde-v} indeed gives an ODE for each realization of trajectory of $G_t$
\begin{equation}\label{ode-v-rewrite}
    \frac{dV_t}{dt}=J(V_t,G_t)=-g_LV_t+G_t(V_E-V_t)\;,\quad \text{when }V_t\in[0,1)\;.
\end{equation}
On the other hand, when the left limit in time of $V_t$ approaches $1$, i.e., $V_{t-}:=\lim_{s\rightarrow t^-}V_s=1$, by \eqref{sde-bc-v}, we set $V_t=0$ and reinitialize the ODE \eqref{ode-v-rewrite}. See Figure~\ref{fig:trajectory} for a typical trajectory of $(V_t,G_t)$.

\begin{figure}[htbp]
    \centering
    \includegraphics[width=0.6\linewidth]{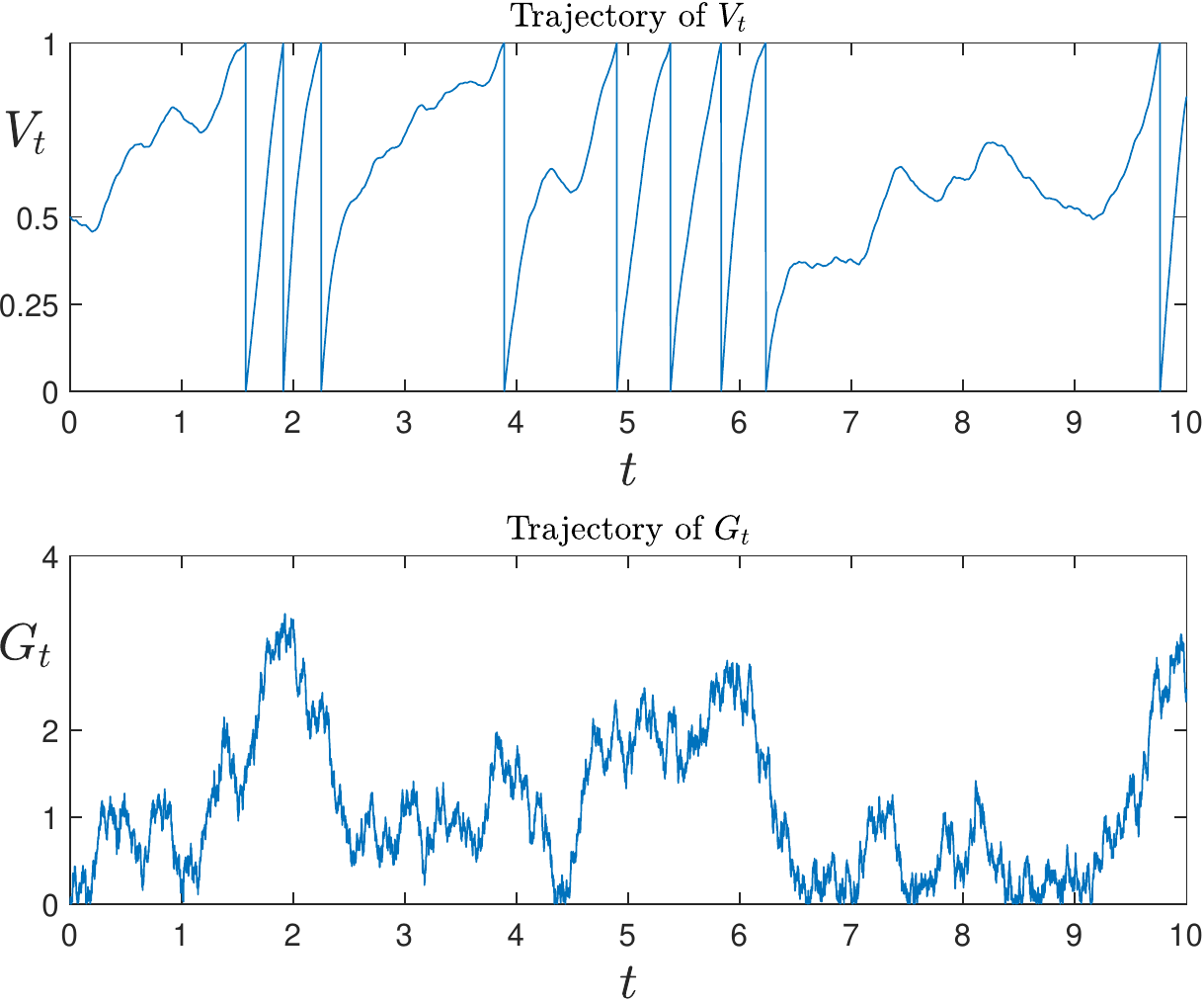}
    \caption{\footnotesize A typical trajectory of $(V_t,G_t)$ via numerical simulations. Parameters: $g_L=1,V_E=2$, other parameters are the same as in \eqref{e:simplify-notations}. }
    \label{fig:trajectory}
\end{figure}

We denote such a time $t$ when $V_t$ has a jump as {a jump time or a spike time}. Biologically, a neuron spikes when its voltage reaches the threshold $V_F=1$. And its voltage is reset to $V_R=0$ after the spike. The ``renewal condition'' \eqref{sde-bc-v} can be viewed as a manifestation of the spike-reset mechanism. Indeed, the SDE system \eqref{sde-v}-\eqref{sde-g} is closely related to the particle system considered in \cite{cai2004effective,cai2006kinetic}, which is the physical motivation to consider the PDE \eqref{eq:FP-PDE}. 

We first give the (pathwise) global well-posedness of \eqref{sde-v}-\eqref{ic-sde} in the following. 

\begin{theorem}\label{thm:sde-existence}
For every initial data $\mu_0$, a probability measure on $\mathcal{X}$, the solution to the SDE system \eqref{sde-v}-\eqref{ic-sde} exists for all time $t\geq0$ and is trajectory-wise unique almost surely. 
\end{theorem}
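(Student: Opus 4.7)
The plan is to exploit the one-way coupling in \eqref{sde-v}--\eqref{sde-g}: the conductance $G_t$ evolves autonomously of $V_t$, and once a sample path of $G$ is fixed, $V_t$ becomes a piecewise-deterministic trajectory driven by an ODE between consecutive spike times. So I would first construct $G_t$ pathwise, then build $V_t$ inductively between jumps, and finally verify that the jump times do not accumulate in finite time. The first step is standard: equation \eqref{sde-g} is a reflected Ornstein--Uhlenbeck SDE on $[0,+\infty)$ with Lipschitz drift and constant diffusion, and normal reflection at the origin encoded by the local time $L_t$. Pathwise existence and uniqueness of such one-dimensional reflected diffusions is classical (Skorokhod map / Lions--Sznitman construction; see \cite{bookMR0192521}), producing a continuous, nonnegative, adapted process $G_t$ on $[0,+\infty)$. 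In particular, for every $T>0$ the random variable $M_T := \sup_{t \in [0,T]} G_t$ is finite almost surely.

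Conditioning on a realization of $G$, I would build $V_t$ recursively. Setting $\tau_0 := 0$ and assuming $\tau_k$ together with $V_{\tau_k} \in [0,1)$ have been defined, the ODE $\dot v_t = -g_L v_t + G_t(V_E - v_t)$ with $v_{\tau_k} = V_{\tau_k}$ has a right-hand side which is affine in $v$ with coefficient $g_L + G_t$ locally bounded in $t$, so the Cauchy--Lipschitz theorem gives a unique absolutely continuous solution up to the stopping time $\tau_{k+1} := \inf\{t > \tau_k : v_t = 1\} \in (\tau_k, +\infty]$. Since $J(0,g) = g V_E \geq 0$, the vector field points inward at the left endpoint, so the trajectory stays in $[0,1)$ on $[\tau_k, \tau_{k+1})$; I define $V_t := v_t$ there and reset $V_{\tau_{k+1}} := 0$ whenever $\tau_{k+1} < \infty$, then iterate.

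The main obstacle is to show $\tau_k \to +\infty$ almost surely, so that $V_t$ is defined globally. This is where the specific structure of $J$ is used. For any $T > 0$, on the almost sure event $\{M_T < \infty\}$ the elementary bound
\[
    \dot v_t \,=\, -(g_L + G_t)\, v_t + G_t V_E \,\leq\, M_T V_E,
\]
applied between two consecutive spikes (starting from $V_{\tau_k} = 0$ for every $k \geq 1$), forces $\tau_{k+1} - \tau_k \geq 1/(M_T V_E)$ as long as $\tau_k \leq T$. Hence the number of spike times in $[0,T]$ is at most $1 + M_T V_E\, T < \infty$ almost surely, ruling out any Zeno-type accumulation, and $(V_t, G_t)$ is well defined on $[0,+\infty)$. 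Pathwise uniqueness is then essentially free: $G_t$ is unique by the first step, and given $G$ the ODE solutions on each inter-spike interval are unique, so the spike times and post-jump trajectories of any two solutions with the same noise and initial data must coincide.
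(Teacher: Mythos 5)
Your proof follows essentially the same route as the paper's: construct the autonomous reflected OU process $G_t$ first (citing the classical Skorokhod/Lions--Sznitman theory), then build $V_t$ piecewise between spike times by solving the ODE, and rule out accumulation of jump times via the bound $J(v,g)\leq g V_E$ together with the a.s.\ finiteness of $\sup_{t\in[0,T]}G_t$. The only cosmetic difference is that you phrase the non-accumulation step as a bound on the number of spikes in $[0,T]$, whereas the paper phrases it as showing $\mathbb{P}\bigl(\lim_k \tau_k \leq N\bigr)=0$; both rest on the same inter-spike lower bound $\tau_{k+1}-\tau_k\geq 1/(V_E \sup_{[0,T]}G)$.
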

\begin{proof}
The global well-posedness of $(G_t)_{t \geq 0}$ is immediate since \eqref{sde-g} gives a standard reflected OU process (see e.g. \cite{ha2009applications,bookMR0192521}). Moreover, $G_t$ has a continuous trajectory almost surely. 

It remains to study $(V_t)_{t \geq 0}$. According to the ODE structure, we first solve \eqref{ode-v-rewrite} when $V_t\in[0,1)$, and reset $V_t=0$ whenever $V_{t^-}=1$, and then repeat this procedure. To show global existence, it suffices to show that there are no infinite jump times in any finite time interval, so that we can repeat the above-mentioned piecewise construction forever. This situation is similar to \cite[Lemma 3.3]{delarue2015particle}. Let $\tau_0 = 0$. For $k \geq 1$, let $\tau_k$ be the $k$-th jump time given by
\begin{equation}\label{def:jump-time}
    \tau_k := \inf\Big\{ t> \tau_{k-1} : \lim_{s\rightarrow t^-} V_{s} = 1 \Big\}\;.
\end{equation}
Then it suffices to show
\begin{equation} \label{e:non_explosive}
    \mathbb{P}\left(\,\lim_{k\rightarrow+\infty}\tau_k=+\infty\,\right)=1.
\end{equation}
For fixed $N>0$, if $\tau_k \leq N$ for every $k\geq0$, then by \eqref{sde-v} and that $J(v,g) \leq g V_E$, we have
\begin{equation}\label{tmp-pf-global}
    \tau_k - \tau_{k-1} \geq \frac{1}{\max_{0\leq t\leq N}J(V_t,G_t)} \geq \frac{1}{V_E\max_{0\leq t\leq N}G_t}\;.
\end{equation}
The last quantity in \eqref{tmp-pf-global} tends to $0$ if and only if $\max_{0\leq t\leq N}G_t = + \infty$. Therefore, we obtain
\begin{equation*}
    \mathbb{P}\Big(\lim_{k\rightarrow+\infty}\tau_k \leq N\Big) = \mathbb{P}\Big(\lim_{k\rightarrow+\infty}\tau_k \leq N,\lim_{k\rightarrow+\infty}(\tau_k-\tau_{k-1}) = 0\Big) \leq \mathbb{P}\Big(\max_{0\leq t\leq N}G_t = + \infty\Big) = 0.
\end{equation*}
Then \eqref{e:non_explosive} follows by sending $N \rightarrow +\infty$. 
\end{proof}

Next, we connect the stochastic process \eqref{sde-v}-\eqref{ic-sde} with the PDE \eqref{eq:FP-PDE}-\eqref{ic-pde} in Section~\ref{sc:3}. And we prove the exponential convergence to the steady state for the SDE \eqref{sde-v}-\eqref{ic-sde} in Section~\ref{sc:4-convergence}.

\section{From the process to its Fokker-Planck equation}\label{sc:3}

To justify the probabilistic reformulation introduced in the previous section, we derive \eqref{eq:FP-PDE}-\eqref{ic-pde} as the Fokker-Planck equation of \eqref{sde-v}-\eqref{ic-sde}. More precisely, we shall show the law of $(V_t,G_t)$ gives a weak solution to the PDE \eqref{eq:FP-PDE}-\eqref{ic-pde}. 

In Section \ref{sc:3.1} we introduce our definition of weak solution (Definition~\ref{def:weak-sol}), discuss its basic properties (Propositions~\ref{prop-regularity-weak} and~\ref{prop:bc}), and show its connection with the SDE \eqref{sde-v}-\eqref{ic-sde} (Theorem \ref{thm:sde-pde}). Some of the proofs are postponed to Section \ref{sc:3.2} and \ref{sc:3.3-BC}.

We define the generator $\mathcal{L}$ by
\begin{equation} \label{e:L}
    \mathcal{L}u = \partial_{gg}u + (1-g)\partial_{g}u + J\partial_{v}u\;.
\end{equation}
Its adjoint $\mathcal{L}^*$ is thus given by
\begin{equation*}
    \mathcal{L}^*u = \partial_{gg}u - \partial_{g} ((1-g)u) - \partial_{v}(J u)\;.
\end{equation*}

\subsection{The Fokker-Planck equation: weak formulation}\label{sc:3.1}

The definition of a weak solution to \eqref{eq:FP-PDE}-\eqref{ic-pde} is introduced as follows.

\begin{definition}[Weak solution of \eqref{eq:FP-PDE}-\eqref{ic-pde}]\label{def:weak-sol} Let $(\mu_t)_{t\geq 0}$ be a family of probability measures on $[0,1)\times[0,+\infty)$ indexed by $t \geq 0$. We say $(\mu_t)_{t\geq 0}$ is a weak solution to \eqref{eq:FP-PDE}-\eqref{ic-pde} with initial data $\mu_0$ if the followings hold:
\begin{enumerate}
    \item For each $t>0$, $\mu_t$ is absolutely continuous with respect to the Lebesgue measure on $\mathcal{X}=[0,1)\times[0,+\infty)$.
    \item We have the identity
\begin{equation}  \label{lem:global_ito}
		\int_{\mathcal{X}}\phi(T,v,g)\mu_T(dv,dg) = \int_{\mathcal{X}}\phi(0,v,g)\mu_0(dv,dg) + \int_0^T \int_{\mathcal{X}} (\partial_t + \mathcal{L})\phi(t,v,g)\mu_t(dv,dg) dt,
\end{equation} for every $T>0$ and $\phi \in \mathbf{\Phi}$ . Here the admissible class of test functions $\mathbf{\Phi}$ is defined by
\begin{equation}\label{class-test-function}
    	\mathbf{\Phi}:= \big\{  \phi(t,v,g) \in \mathcal{C}^{\infty}_{c}\big([0,+\infty) \times \mathcal{X} \big)\,: \; \phi(t,0,g) = \phi(t,1,g),\; \partial_g\phi(t,v,0)=0  \big\}\;.
\end{equation}
\end{enumerate}
\end{definition}

\begin{remark}
    The restriction \eqref{class-test-function} on the test function $\phi$ is a reflection of the boundary conditions. We shall see it more clearly in the discussion around Proposition~\ref{prop:bc} and in the proof of Theorem~\ref{thm:sde-pde}. We should notice that the domain $[0,+\infty) \times \mathcal{X}$ for $(t,v,g)$ includes the boundaries $\{t=0\}$ and $\{g=0\}$. Hence, $\phi$ having compact support in this domain does not imply $\phi(t,v,0) = 0$ (and we do not require it). In fact, we need to allow our test functions to have arbitrary values at $g=0$ to ensure the boundary condition \eqref{bc-g-pde}. 
\end{remark}

Let us give two propositions to elucidate Definition \ref{def:weak-sol} before we state its connection to the stochastic process \eqref{sde-v}-\eqref{ic-sde}. We shall see in the first proposition that the second condition \eqref{lem:global_ito} implies \eqref{eq:FP-PDE} in the distributional sense, by restricting to appropriate test functions. Further, the second condition itself also ensures the \textit{interior} regularity of $\mu_t$ thanks to the hypoellipticity, which implies that the equation \eqref{eq:FP-PDE} is indeed satisfied in the classical sense.

\begin{proposition}\label{prop-regularity-weak}
    Let $(\mu_t)_{t\geq 0}$ be a family of probability measures on $[0,1)\times[0,+\infty)$ satisfying \eqref{lem:global_ito}. Then $\mu_t$ is a $\mathcal{C}^{\infty}$ function inside $(0,1)\times(0,+\infty)$. More precisely, there exists a smooth non-negative function $p \in \mathcal{C}^{\infty}\bigl((0,+\infty)\times(0,1)\times(0,+\infty)\bigr)$ such that for every $t > 0$, one has
\begin{equation}\label{interior-density}
    \mu_t(A)=\int_{A}p(t,v,g)dvdg,\qquad \text{for all measurable $A\subseteq (0,1)\times(0,+\infty)$}\;.
\end{equation}
Moreover, the equation \eqref{eq:FP-PDE} is satisfied in the classical sense. 
\end{proposition}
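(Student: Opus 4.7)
The plan is to rely on Hörmander's hypoellipticity theorem applied to the adjoint Fokker--Planck equation on the open set $(0,+\infty) \times (0,1) \times (0,+\infty)$. First, I would specialize the weak identity \eqref{lem:global_ito} to test functions $\phi \in \mathcal{C}^\infty_c\bigl((0,T) \times (0,1) \times (0,+\infty)\bigr)$ for arbitrary $T > 0$. Such $\phi$ automatically lie in $\mathbf{\Phi}$ since the boundary conditions in \eqref{class-test-function} hold vacuously, and the evaluation terms at $t = 0$ and $t = T$ vanish. Letting $T \to +\infty$, this gives
\[
\int_0^{+\infty} \int_{(0,1)\times(0,+\infty)} (\partial_t + \mathcal{L})\phi \, d\mu_t \, dt = 0
\]
for every $\phi \in \mathcal{C}^\infty_c\bigl((0,+\infty) \times (0,1) \times (0,+\infty)\bigr)$, which is exactly the statement that $\mu$, viewed as a distribution on this open set, satisfies $\partial_t\mu - \mathcal{L}^*\mu = 0$ in the distributional sense.

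Next, I would verify Hörmander's bracket-generating condition for the parabolic operator. Writing $\mathcal{L} = X_1^2 + X_0$ with
\[
X_1 = \partial_g, \qquad X_0 = (1-g)\partial_g + J(v,g)\partial_v,
\]
a direct computation yields the Lie bracket
\[
[X_1, X_0] = -\partial_g + (V_E - v)\partial_v.
\]
Because $V_E > V_F = 1 > v$ throughout the interior, the coefficient $V_E - v$ never vanishes, so $\{X_1,\,[X_1, X_0]\}$ span the $(v,g)$-tangent plane at every point; combined with the parabolic drift $\partial_t + X_0$, this supplies the remaining time direction. The hypotheses of Hörmander's theorem (Appendix \ref{sc:hormander}) are therefore satisfied on $(0,+\infty) \times (0,1) \times (0,+\infty)$.

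Applying the theorem promotes the distributional equation to the conclusion that $\mu_t(dv\,dg) = p(t,v,g)\,dv\,dg$ for some smooth $p \in \mathcal{C}^\infty\bigl((0,+\infty) \times (0,1) \times (0,+\infty)\bigr)$; non-negativity is automatic since each $\mu_t$ is a probability measure. With smoothness in hand, integrating the distributional identity by parts against arbitrary interior test functions upgrades it to the pointwise PDE \eqref{eq:FP-PDE} under the normalization \eqref{e:simplify-notations}. I expect the main technical subtlety to be the bracket-generating check itself; fortunately the non-degeneracy reduces to the single algebraic inequality $V_E - v > 0$, which is guaranteed by the standing assumption $V_E > V_F = 1$. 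Boundary behavior at $v \in \{0,1\}$ and $g = 0$ is deliberately outside the scope of this proposition; that information is encoded in the full weak formulation with test functions from $\mathbf{\Phi}$ and will presumably be addressed in Proposition \ref{prop:bc}.
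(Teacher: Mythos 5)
Your proposal is correct and takes essentially the same route as the paper: restrict to interior test functions $\phi \in \mathcal{C}^\infty_c\bigl((0,+\infty)\times(0,1)\times(0,+\infty)\bigr) \subset \mathbf{\Phi}$ to obtain the distributional equation $(\partial_t - \mathcal{L}^*)\mu = 0$, then invoke H\"ormander's theorem to get a smooth interior density. The only cosmetic difference is that you verify the bracket-generating condition inline (computing $[X_1,X_0] = (V_E - v)\partial_v - \partial_g$ and noting $V_E - v > 0$ on the interior), whereas the paper delegates this check to Lemma~\ref{lem:hypoelliptic} in the appendix, which carries out the same calculation with the vector fields augmented by the time direction.
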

\begin{proof}
We restrict $\phi \in \mathcal{C}^{\infty}_c((0,+\infty)\times(0,1)\times(0,+\infty))\subset \mathbf{\Phi}$ in the weak formulation \eqref{lem:global_ito} to derive
\begin{equation}\label{global-ito-space}
    \int_0^{+\infty} \int_{\mathcal{X}} (\partial_t + \mathcal{L})\phi(t,v,g)\mu_t(dv,dg) dt=0,\qquad \forall \phi \in \mathcal{C}^{\infty}_c((0,+\infty)\times(0,1)\times(0,+\infty))\;.
\end{equation}
This exactly gives that $\mu_t$ satisfies the equation \eqref{eq:FP-PDE} in the distributional sense, that is,
\begin{equation}\label{eq:distributional-sense}
    (\p_t-\mathcal{L}^*)\mu_t=0 \qquad \text{in} \;  \mathcal{D}' \big((0,+\infty)\times(0,1)\times(0,+\infty) \big).
\end{equation}
Recall an operator $K$ is hypoelliptic in a domain $\Omega$ if $Ku\in \mathcal{C}^{\infty}(U)$ implies $u\in \mathcal{C}^{\infty}(U)$ for every open set $U\subseteq\Omega$. By H\"ormander's Theorem, $\p_t-\mathcal{L}^*$ is hypoelliptic (see Lemma \ref{lem:hypoelliptic} for more details). Thus by \eqref{eq:distributional-sense} we deduce that $\mu_t$ is indeed has a smooth interior density $p$. Combining this with \eqref{eq:distributional-sense}, we derive that \eqref{eq:FP-PDE} is indeed satisfied in the classical sense. 
\end{proof}

In view of the interior regularity given in Proposition \ref{prop-regularity-weak}, the main point of the first condition in Definition \ref{def:weak-sol} is to ensure the \textit{boundary} regularity, that is, $\mu_t$ is not singular at $v=0$ or $g=0$.

Although the boundary conditions \eqref{bc-g-pde}, \eqref{bc-v-2-pde} and \eqref{bc-v-1-pde} do not show up in Definition \ref{def:weak-sol} explicitly, they are indeed encoded in the class of test functions \eqref{class-test-function}. Formally they arise if we integrate by parts for the last term in \eqref{lem:global_ito}, and use the boundary constraints for test functions in \eqref{class-test-function}. In particular, the first constraint $\phi(t,0,g) = \phi(t,1,g)$ corresponds to the boundary condition at $v=0$, and the second constraint $\partial_g\phi(t,v,0)=0$ corresponds to the boundary condition at $g=0$. For a rigorous statement, we have the following.

\begin{proposition}\label{prop:bc}
  Let $(\mu_t)_{t\geq 0}$ be a weak solution to  \eqref{eq:FP-PDE}-\eqref{ic-pde} as in Definition \ref{def:weak-sol}. Denote its interior density function as $p(t,v,g)$. Then the boundary conditions in $v$ \eqref{bc-v-2-pde}-\eqref{bc-v-1-pde} are satisfied in the following sense:
    for every test function $\psi(t,g)\in \mathcal{C}^{\infty}_c\bigl((0,+\infty)\times(0,+\infty)\bigr)$, we have
    \begin{equation}\label{bc-v-weak}
\lim_{v\rightarrow0^+}\int_{\mathbb{R}^+\times\mathbb{R}^+}J(v,g)p(t,v,g)\psi(t,g)dtdg=\lim_{v\rightarrow1^-}\int_{\mathbb{R}^+\times\mathbb{R}^+}J(v,g)p(t,v,g)\psi(t,g)dtdg,
    \end{equation}
    where both limits exist and are finite. If in addition $\psi \geq 0$ with support in $(0,+\infty)\times(0,g_F)$, then both limits in \eqref{bc-v-weak} are $0$. 
\end{proposition}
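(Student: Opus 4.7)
The plan is to fix $\psi\in\mathcal{C}^{\infty}_c((0,+\infty)\times(0,+\infty))$ and study
\begin{equation*}
F(v):=\iint_{\mathbb{R}^+\times\mathbb{R}^+}J(v,g)\,p(t,v,g)\,\psi(t,g)\,dt\,dg,\qquad v\in(0,1).
\end{equation*}
Proving the proposition reduces to showing (i) the one-sided limits $F(0^+)$ and $F(1^-)$ exist, (ii) they agree, and (iii) under the extra support hypothesis they both vanish.

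For (i), I would exploit the interior smoothness $p\in\mathcal{C}^\infty((0,+\infty)\times(0,1)\times(0,+\infty))$ provided by Proposition~\ref{prop-regularity-weak}. Differentiating $F$ under the integral sign, substituting $\partial_v(Jp)=-\partial_t p+\partial_{gg}p-\partial_g((1-g)p)$ from the classical-sense equation, and integrating by parts in $t$ and $g$ against $\psi$ (all boundary terms vanishing by compactness of $\mathrm{supp}\,\psi$ in $(0,+\infty)\times(0,+\infty)$) yields
\begin{equation*}
F'(v)=\iint p(t,v,g)\bigl[\partial_t\psi+\partial_{gg}\psi+(1-g)\partial_g\psi\bigr]dt\,dg.
\end{equation*}
A Fubini bound then reduces $\int_0^1|F'(v)|\,dv$ to a constant multiple of $\iint_{\mathrm{supp}\,\psi}q(t,g)\,dt\,dg$, where $q(t,g):=\int_0^1 p(t,v,g)\,dv$ is the $g$-marginal of $\mu_t$; since $\int_0^\infty q(t,g)\,dg=1$ for every $t>0$ and $\mathrm{supp}\,\psi$ is bounded in $t$, this is finite. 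Thus $F'\in L^1(0,1)$, the limits $F(0^+)$ and $F(1^-)$ exist, and $F(1^-)-F(0^+)=\int_0^1 F'(v)\,dv$.

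For (ii), I would plug $\Phi(t,v,g):=\psi(t,g)$ into \eqref{lem:global_ito}. This $\Phi$ lies in $\mathbf{\Phi}$: $\Phi(t,0,g)=\Phi(t,1,g)=\psi(t,g)$ holds trivially, and $\partial_g\Phi(t,v,0)=\partial_g\psi(t,0)=0$ because $\psi$ vanishes near $\{g=0\}$. Since $\Phi$ vanishes at $t=0$ and at any $T$ beyond the time-support of $\psi$, and since $J\partial_v\Phi=0$, the weak identity collapses to $\iint[\partial_t\psi+\partial_{gg}\psi+(1-g)\partial_g\psi]\,p\,dv\,dg\,dt=0$; by Fubini and the formula from (i), this is precisely $\int_0^1 F'(v)\,dv$, giving $F(0^+)=F(1^-)$. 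For (iii), the identities $J(0,g)=gV_E>0$ for $g>0$ and $J(1,g)=g(V_E-1)-g_L<0$ on $(0,g_F)$, together with continuity of $J$, provide some $v_0>0$ so that $J(v,g)>0$ for all $v\in[0,v_0]$ and $J(v,g)<0$ for all $v\in[1-v_0,1)$, uniformly for $g$ in the (compact) $g$-projection of $\mathrm{supp}\,\psi$. With $\psi,p\geq 0$, this forces $F\geq 0$ near $v=0$ and $F\leq 0$ near $v=1$, so their common boundary limit must equal $0$.

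The main obstacle is step (i): the density $p$ is not assumed to extend regularly up to $\{v=0\}$ or $\{v=1\}$, so one cannot hope to obtain boundary limits by a pointwise argument. The key trick is to convert the question into an $L^1$ control of $F'$ via the interior PDE, using the compact support of $\psi$ to absorb all boundary contributions in $t$ and $g$. Once this is in place, the choice $\Phi=\psi$ independent of $v$ extracts the flux-matching identity directly from the weak formulation, and the sign analysis of $J$ on the support of $\psi$ closes the argument.
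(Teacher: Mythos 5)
Your proof is correct and takes essentially the same approach as the paper: your derivation of $F'(v)$ from the interior classical-sense PDE plus integration by parts in $(t,g)$ is exactly the paper's identity $B_{\psi}(\eps_1,\eps_2)=\int_{\eps_1}^{1-\eps_2}F'(v)\,dv$, and your choice $\Phi(t,v,g):=\psi(t,g)$ in the weak formulation reproduces the content of the paper's Lemma~\ref{lem:local-bd-regularity}. The one minor difference is that you establish the existence of the one-sided limits directly from $F'\in L^1(0,1)$, which is a slightly cleaner bookkeeping than the paper's Cauchy-type argument on $C_2(\eps_2)-C_1(\eps_1)$, but it is the same underlying estimate.
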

Here we only state for the $v$ boundary condition as it is a  unique feature of this problem, and the $g$ direction is similar. We postpone the proof of Proposition \ref{prop:bc} to Section \ref{sc:3.3-BC}. 

Note that Proposition~\ref{prop:bc} only assumes $(\mu_t)_{t \geq 0}$ satisfying Definition~\ref{def:weak-sol}, and Proposition~\ref{prop-regularity-weak} only assumes $(\mu_t)_{t \geq 0}$ satisfying \eqref{lem:global_ito}. In particular, they do not assume any relationship between $(\mu_t)_{t \geq 0}$ and the process $(V_t, G_t)$. 

In what follows, we will turn to the process $(V_t, G_t)$. We first have the following proposition concerning its boundary behavior. Its proof is postponed to Section \ref{sc:3.2}.

\begin{proposition}\label{prop:boundary-regularity}
Let $(V_t,G_t)_{t\geq0}$ be the solution of \eqref{sde-v}-\eqref{ic-sde}. Then we have
\begin{equation}\label{regular-v=0}
    \mathbb{P}\left({V_t=0}\right)=\mathbb{P}\left(V_{t^-}=1\right)=0,\qquad t>0,
\end{equation} and
\begin{equation}\label{regular-g=0}
    \mathbb{P}\left(G_t=0\right)=0,\qquad t>0.
\end{equation}
\end{proposition}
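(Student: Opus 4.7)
The plan is to dispatch the three claims in turn: classical reflected OU theory for $G_t$, an elementary dynamics argument that reduces the $V$-claims to a statement about spike times, and a Fubini + Cameron--Martin shift to settle the latter.

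\emph{Step 1: $\mathbb{P}(G_t=0)=0$.} Since $(G_t)_{t\ge 0}$ in \eqref{sde-g} is an autonomous reflected Ornstein--Uhlenbeck process on $[0,+\infty)$, classical results give that for every $t>0$ its marginal law admits a density on $[0,+\infty)$ with no atom at $g=0$; this follows from the method-of-images transition kernel, or from H\"{o}rmander's theorem applied to the one-dimensional generator $\partial_{gg}+(1-g)\partial_g$ with the reflective boundary. As a by-product, $\{s:G_s=0\}$ has Lebesgue measure zero almost surely.

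\emph{Step 2: reducing the $V$-claims to spike times.} The reset rule \eqref{sde-bc-v} gives $\{V_{t^-}=1\}\subseteq\{V_t=0\}$, so it suffices to show $\mathbb{P}(V_t=0)=0$. On the event $\{V_t=0,\,V_{t^-}\ne 1\}$ no jump occurs at $t$ and $V$ is continuous on a left-neighborhood of $t$, on which it solves $dV/du=J(V,G_u)$ smoothly on $[\tau_*,t]$, where $\tau_*$ is the most recent spike before $t$ (or $0$ if none). Either $V\equiv 0$ on an entire left-neighborhood of $t$, forcing $J(0,G_u)=V_E G_u=0$ on a set of positive Lebesgue measure --- a null event by Step~1; or $V>0$ just to the left of $t$ with $V_u\to 0$, so $V$ has a local minimum at $t$ with value $0$, and the one-sided derivative test at the minimum yields $J(0,G_t)=V_E G_t\le 0$, i.e.\ $G_t=0$ --- again null. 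Hence $\mathbb{P}(V_t=0,\,V_{t^-}\ne 1)=0$, and the remaining claim $\mathbb{P}(V_{t^-}=1)=0$ amounts to saying that no spike lands at the deterministic time $t$. Since the spike times are a.s.\ countable by \eqref{e:non_explosive}, and by the strong Markov property at $\tau_{k-1}$, the problem reduces to showing: \emph{for every initial point $(v_0,g_0)\in\mathcal{X}$, the first passage time $T:=\inf\{s>0:V_s=1\}$ of the system started from $(V_0,G_0)=(v_0,g_0)$ has no atom on $(0,+\infty)$.}

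\emph{Step 3 (the main obstacle): $T$ is atomless.} I would argue this by strict monotonicity combined with the Cameron--Martin theorem. Fix a smooth nonnegative $\phi$, not identically zero, compactly supported in $(0,+\infty)$, and set $h(s):=\int_0^s\phi(u)\,du\in H^1$. The perturbation $B\mapsto B+\epsilon h$ raises the unreflected OU pointwise for $\epsilon>0$, hence by monotonicity of the Skorokhod reflection map it raises $G$ pointwise; since $\partial_g J=V_E-V>0$ on $[0,1)$, a variation-of-constants argument on the linearization of $dV/du=J(V,G_u)$ gives strict increase of $V$ and hence strict decrease of $T$ on $\{T<+\infty\}$. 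Thus, for every Brownian path $B$, the map $\epsilon\mapsto T(B+\epsilon h)\in(0,+\infty]$ is strictly monotone, so its level set $\{\epsilon\in[0,1]:T(B+\epsilon h)=t\}$ is at most a singleton. By Fubini,
\begin{equation*}
\int_0^1\mathbb{P}\bigl(T(B+\epsilon h)=t\bigr)\,d\epsilon \;=\; \mathbb{E}\bigl[\,\text{Leb}\{\epsilon\in[0,1]:T(B+\epsilon h)=t\}\bigr] \;=\; 0,
\end{equation*}
so there exists $\epsilon^*\in(0,1)$ with $\mathbb{P}(T(B+\epsilon^* h)=t)=0$. By the Cameron--Martin theorem the law of $B+\epsilon^* h$ is mutually absolutely continuous with Wiener measure, so $\mathbb{P}(T(B)=t)=0$ as well. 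The technical delicacies --- making the Skorokhod-reflection monotonicity (and its strict version when $G$ does not touch $0$ early on) rigorous, verifying joint measurability for Fubini, and extending from deterministic to arbitrary initial distributions --- are where the bulk of the actual work in this step lies.
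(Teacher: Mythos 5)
Steps 1 and 2 of your proposal parallel the paper's own argument: both dispatch $G$ via reflected-OU theory, and both reduce the $V$-claims to showing the spike times $\tau_k$ have no atoms, using the strong Markov property at each $\tau_{k-1}$. (The paper shows $\{V_{t^-}=0\}\subseteq\{G_t=0\}$ directly from $J(0,g)>0$; your one-sided-derivative test at a putative minimum carries the same content.) Step 3 is where you genuinely diverge. The paper introduces the auxiliary process $\widehat{V}_t$ in \eqref{sde-without-jump}, with the reset rule removed and the $v$-domain extended so that $v=1$ becomes an interior point, and then invokes H\"ormander's theorem (Lemma~\ref{lem:density_continuity_1}) to get a smooth joint density for $(\widehat{V}_t,G_t)$; the bound $\mathbb{P}(\tau_1=t)\le\mathbb{P}(\widehat{V}_t=1)=0$ is then immediate. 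You instead propose a Cameron--Martin shift combined with Fubini, hinging on strict monotonicity of $\epsilon\mapsto T(B+\epsilon h)$.

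That strict monotonicity is a real gap, not merely a "technical delicacy." The Skorokhod map on $[0,+\infty)$ does not preserve strict pointwise ordering: if both pre-reflection paths are negative on an interval, both reflected processes sit at $0$ there, and the strict increase of the unreflected OU under $B\mapsto B+\epsilon h$ does not propagate through $G$. Your parenthetical restriction ("when $G$ does not touch $0$ early on") does not cover the general initial condition $g_0\in[0,M]$, which includes $g_0$ arbitrarily close to, or equal to, $0$. Weak monotonicity only gives that $\{\epsilon:T(B+\epsilon h)=t\}$ is an interval, which is not enough for the Fubini step. One can probably rescue the argument by a different route --- a backward Gr\"onwall estimate on $w=V^{\epsilon_2}-V^{\epsilon_1}\geq 0$ with $w_{T^*}=0$ shows that $T^{\epsilon_1}=T^{\epsilon_2}=T^*$ forces $w\equiv 0$ and hence $G^{\epsilon_1}\equiv G^{\epsilon_2}$ on the whole pre-spike interval, which in turn forces the local-time difference $L^{\epsilon_1}-L^{\epsilon_2}$ to equal the smooth, generically nonconstant function $X^{\epsilon_2}-X^{\epsilon_1}$; this is impossible since the local times are flat off the Lebesgue-null set $\{G=0\}$. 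But that argument is not in your write-up, and carrying it out is at least as much work as the paper's hypoellipticity route, which sidesteps the reflection delicacies entirely by making $v=1$ an interior point of a locally hypoelliptic problem.
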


We are now ready to establish the relation between the stochastic process \eqref{sde-v}-\eqref{ic-sde} and the PDE \eqref{eq:FP-PDE}-\eqref{ic-pde}. This is the content of the following theorem. 

\begin{theorem}\label{thm:sde-pde}
Let $(V_t,G_t)_{t\geq0}$ be the unique solution to the SDE \eqref{sde-v}-\eqref{ic-sde} established in Theorem \ref{thm:sde-existence}. Denote the law of $(V_t,G_t)$ as $\mu_t$. Then $(\mu_t)_{t\geq0}$ is a weak solution to \eqref{eq:FP-PDE}-\eqref{ic-pde} with initial data $\mu_0$ in the sense of Definition~\ref{def:weak-sol}.
\end{theorem}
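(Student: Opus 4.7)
The plan is to establish the two conditions of Definition~\ref{def:weak-sol} separately. The weak identity \eqref{lem:global_ito} will be obtained by applying It\^o's formula to the process $\phi(t,V_t,G_t)$ for a test function $\phi \in \mathbf{\Phi}$, while the absolute continuity of $\mu_t$ will then be read off by combining the interior hypoelliptic regularity (Proposition~\ref{prop-regularity-weak}) with the boundary non-concentration property (Proposition~\ref{prop:boundary-regularity}).

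For the weak identity, fix $T > 0$ and $\phi \in \mathbf{\Phi}$. Between two consecutive jump times $\tau_{k-1}$ and $\tau_k$ (which are almost surely finite in number on $[0,T]$ by Theorem~\ref{thm:sde-existence}), the trajectory of $(V_t,G_t)$ is continuous: $V_t$ solves the ODE \eqref{ode-v-rewrite}, and $G_t$ is a reflected OU process. Applying the standard It\^o formula with local time over each such interval gives
\begin{equation*}
    d\phi(t,V_t,G_t) = (\partial_t + \mathcal{L})\phi(t,V_t,G_t)\,dt + \sqrt{2}\,\partial_g\phi(t,V_t,G_t)\,dB_t + \partial_g\phi(t,V_t,0)\,dL_t\;,
\end{equation*}
using that $dV_t = J\,dt$, $d\langle G\rangle_t = 2\,dt$, and that $L_t$ is supported on $\{G_t = 0\}$. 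The boundary constraint $\partial_g\phi(t,v,0)=0$ built into $\mathbf{\Phi}$ kills the local-time contribution. At each jump time $\tau_k$, $G_t$ and $t$ are continuous while $V_{\tau_k^-} = 1$ and $V_{\tau_k} = 0$, so the jump of $\phi$ equals $\phi(\tau_k,0,G_{\tau_k}) - \phi(\tau_k,1,G_{\tau_k})$; by the other constraint $\phi(t,0,g)=\phi(t,1,g)$ in $\mathbf{\Phi}$ this jump vanishes. Summing over the finitely many inter-jump intervals in $[0,T]$, taking expectations, and noting that the Brownian integral is a genuine martingale (since $\partial_g\phi$ is bounded with compact support), we obtain \eqref{lem:global_ito}.

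With \eqref{lem:global_ito} in hand, Proposition~\ref{prop-regularity-weak} immediately supplies a smooth density $p(t,\cdot,\cdot)$ for $\mu_t$ on the interior $(0,1)\times(0,+\infty)$. To upgrade this to absolute continuity on the whole of $\mathcal{X}=[0,1)\times[0,+\infty)$, I invoke Proposition~\ref{prop:boundary-regularity}: the identities $\mathbb{P}(V_t=0) = \mathbb{P}(G_t=0) = 0$ for $t>0$ mean $\mu_t$ places no mass on the lines $\{v=0\}$ or $\{g=0\}$ (and the line $\{v=1\}$ is not part of $\mathcal{X}$). Since these are the only Lebesgue-null sets that could a priori carry atomic pieces, $\mu_t$ agrees with the interior density $p(t,\cdot,\cdot)$ as a measure on $\mathcal{X}$, establishing condition~(1) of Definition~\ref{def:weak-sol}.

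I expect the main obstacle to be the careful bookkeeping of the jump structure and the local time in the It\^o expansion: one must verify that the generalised It\^o formula applies despite the renewal/reset mechanism, confirm almost-sure non-explosion of jumps on $[0,T]$ to license the interval-by-interval summation, and justify the martingale property of the stochastic integral across jump times. The algebraic cancellations are precisely engineered by the two conditions defining $\mathbf{\Phi}$, which makes the verification conceptually clean once the pathwise calculus is set up correctly. The remaining pieces—invoking Propositions~\ref{prop-regularity-weak} and~\ref{prop:boundary-regularity}—are then immediate.
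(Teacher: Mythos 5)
Your proposal follows the same route as the paper's proof: apply It\^o's formula with local time and jumps to $\phi(t,V_t,G_t)$, observe that the two constraints defining $\mathbf{\Phi}$ cancel respectively the local-time term and the jump sum, take expectations, and then combine Propositions~\ref{prop-regularity-weak} and~\ref{prop:boundary-regularity} for absolute continuity. The only cosmetic difference is that the paper invokes a general semimartingale-with-jumps It\^o formula (Protter) in one stroke, whereas you obtain the same identity by summing the continuous It\^o formula over the (a.s.\ finitely many) inter-jump intervals and inserting the jump increments by hand.
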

\begin{proof}
    \textit{Part 1: weak formulation.} First, we show that $\mu_t$ satisfies the weak formulation \eqref{lem:global_ito}, which is the second condition in Definition \ref{def:weak-sol}. For $\phi(t,v,g)\in \mathcal{C}^{\infty}_c(\mathbb{R}^3)$, by It\^o's formula for semimartingales with jumps (see for example \cite[Theorem~2.33]{protter}), we get 
	\begin{equation}\label{Ito-semi}
	\begin{split}
		\phi(T,V_T,G_T)  - \phi(0, &V_0,G_0) =\int_0^T (\partial_t + \mathcal{L})\phi(t,V_{t-},G_t) dt+ \sqrt{2}\int_0^T \partial_g \phi(t,V_{t-},G_t)dB_t \\
        &+ \int_0^T \partial_g\phi(t,V_{t-},G_t) dL_t+ \sum_{k\geq 1:\;\tau_k\leq T} \big(\phi(\tau_k,0,G_{\tau_k}) - \phi(\tau_k,1,G_{\tau_k})\big)\,
	\end{split}
	\end{equation} 
     where $\tau_k$ is the $k$-th jump time in $v$ direction defined in \eqref{def:jump-time}. Note that the first two terms on the right hand side of \eqref{Ito-semi} appear in the usual It\^o's formula, the term of the integration with respect to the local time $L_t$ appears since the reflective boundary in $g$-direction, and the last term appears since $V_t$ jumps from $1$ to $0$ at the boundary in $v$-direction. 
 
	Then, we restrict $\phi\in\mathbf{\Phi}$ defined in \eqref{class-test-function} to get rid of the latter two terms in \eqref{Ito-semi} and derive
	\begin{equation}\label{weak-pathwise}
		\phi(T,V_T,G_T)  - \phi(0,V_0,G_0) = \int_0^T (\partial_t + \mathcal{L})\phi(t,V_{t},G_t) dt + \sqrt{2}\int_0^T \partial_g \phi(t,V_{t-},G_t)dB_t 
	\end{equation}
    Indeed for $\phi\in\mathbf{\Phi}$, the sums of the jumping terms vanish since $\phi(t,0,g) = \phi(t,1,g)$, and the integration with respect to local time process $L_t$ also vanishes since $L_t$ changes only when $G_t=0$ but $(\partial_g \phi)(t,v,0) = 0$ for $\phi \in \mathbf{\Phi}$. Taking expectation on both sides in \eqref{weak-pathwise} gives \eqref{lem:global_ito}. 

    \textit{Part 2: regularity of $\mu_t$.} We now confirm the first condition in Definition~\ref{def:weak-sol}. Since $\mu_t$ satisfies \eqref{lem:global_ito}, we can apply Proposition~\ref{prop-regularity-weak} to obtain the interior regularity of $\mu_t$ given in \eqref{interior-density}. Combining the interior regularity and Proposition~\ref{prop:boundary-regularity} above, we immediately deduce that $\mu_t$ is absolutely continuous with respect to the Lebesgue measure on $[0,1)\times[0,+\infty)$.
\end{proof}

The rest of this section is arranged as allows. Section \ref{sc:3.2} is devoted to proving Proposition \ref{prop:boundary-regularity}, which completes the proof of Theorem \ref{thm:sde-pde}.  In Section \ref{sc:3.3-BC} we start from Definition \ref{def:weak-sol} to explicitly derive  the boundary conditions, i.e. proving Proposition \ref{prop:bc}.

\subsection{Boundary regularity: Proof of Proposition \ref{prop:boundary-regularity}}\label{sc:3.2}

This section is devoted to proving the boundary regularity in Proposition \ref{prop:boundary-regularity}. We give details for \eqref{regular-v=0} only since \eqref{regular-g=0} is a standard property for the reflected OU process (see e.g. \cite{bookMR0192521,ha2009applications}). 

For the interior regularity (Proposition~\ref{prop-regularity-weak}), we used the hypoellipticity of the operator $\p_t-\mathcal{L}^*$ thanks to H\"ormander's theorem. Since hypoellipticity is a local property, a direct use of H\"ormander's theorem does not imply the boundary behavior. To prove the boundary regularity in Proposition~\ref{prop:boundary-regularity}, an auxiliary process is introduced in \eqref{sde-without-jump} which extends the domain of $v$ beyond $(0,1)$. 

Consider the Markov process $\widehat{V}_t$ without jumps, given by
\begin{equation}\label{sde-without-jump}
	d\widehat{V}_t = J(\widehat{V}_t,G_t)dt,\quad \widehat{V}_0 = V_0,\qquad \widehat{V}_t\in\mathbb{R}.
\end{equation}
Note that $J(v,g)=-g_Lv+g(V_E-v)$ is naturally defined for all $v\in\mathbb{R}$. The process \eqref{sde-without-jump} is just $V_t$ in \eqref{sde-v} without the renewal condition \eqref{sde-bc-v}. Because the $v$-domain is extended, $v=1$ becomes an interior point for $\hat{V}_t$, thus allowing us to apply H\"ormander's theorem.

We first give the smoothness of the density of $(\widehat{V}_t,G_t)$ in Lemma~\ref{lem:density_continuity_1} by the hypoellipticity property given in Lemma~\ref{lem:hypoelliptic}.

\begin{lemma} \label{lem:density_continuity_1}
	For every $(v_0,g_0)\in\mathcal{X}$, the distribution of $(\widehat{V}_t,G_t)$ with initial data $(\widehat{V}_0,G_0)=(v_0,g_0)$ has a density $\widehat{\rho}_t^{v_0,g_0}(v,g)$ for $t>0$, and the mapping
	\begin{equation*}
		(t,v_0,g_0,v,g) \mapsto \widehat{\rho}_t^{v_0,g_0}(v,g) \in \mathcal{C}^{\infty}(\mathbb{R}^+\times(0,V_E)\times\mathbb{R}^+\times(0,V_E)\times\mathbb{R}^+).
	\end{equation*}
\end{lemma}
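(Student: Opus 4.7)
The plan is to apply Hörmander's theorem (Lemma~\ref{lem:hypoelliptic}) to the operator $\mathcal{L}$ restricted to the region $v \in (0, V_E)$, $g > 0$. Since the reflection of $G_t$ at $g=0$ only activates when $G_t = 0$, the joint process $(\widehat{V}_t, G_t)$ locally coincides with an unconstrained diffusion generated by $\mathcal{L}$ whenever $g > 0$, so Hörmander's theorem applies. I would write $\mathcal{L} = X_0 + X_1^2$ with drift and diffusion vector fields
\begin{equation*}
    X_0 = (1-g)\,\partial_g + J(v,g)\,\partial_v,\qquad X_1 = \partial_g,
\end{equation*}
and directly compute the Lie bracket
\begin{equation*}
    [X_0, X_1] = \partial_g - (V_E - v)\,\partial_v.
\end{equation*}
For every $v \in (0, V_E)$, the coefficient $V_E - v$ is strictly positive, so $\{X_1,\,[X_0, X_1]\}$ spans $\mathbb{R}^2$ at every point, verifying the parabolic Hörmander condition on $\mathbb{R}^+ \times (0, V_E) \times (0, +\infty)$.

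The second step is to extract smoothness in the ``terminal'' variables $(t, v, g)$. The law of $(\widehat{V}_t, G_t)$ started at $(v_0, g_0)$ solves the Fokker--Planck equation $(\partial_t - \mathcal{L}^*)\mu = 0$ with initial datum $\delta_{(v_0, g_0)}$ in the distributional sense on $\mathbb{R}^+ \times \mathbb{R} \times (0, +\infty)$. Since $\partial_t - \mathcal{L}^*$ is hypoelliptic there, the distributional solution is a smooth function, yielding a density $\widehat{\rho}_t^{v_0, g_0}(v, g) \in \mathcal{C}^{\infty}\bigl(\mathbb{R}^+ \times (0, V_E) \times (0, +\infty)\bigr)$ for each fixed $(v_0, g_0)$. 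Smoothness in the initial variables $(v_0, g_0)$ is obtained in parallel by applying Hörmander's theorem to the backward Kolmogorov operator $\partial_s + \mathcal{L}$ acting in $(s, v_0, g_0)$, which is built from the same vector fields $X_0, X_1$.

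The last step, which I expect to be the main technical hurdle, is combining the separate forward and backward regularity into joint smoothness in all five variables $(t, v_0, g_0, v, g)$. The cleanest route is via the Chapman--Kolmogorov identity
\begin{equation*}
    \widehat{\rho}_t^{v_0, g_0}(v, g) = \int_{\mathbb{R} \times (0, +\infty)} \widehat{\rho}_{t/2}^{v_0, g_0}(v', g')\,\widehat{\rho}_{t/2}^{v', g'}(v, g)\,dv'\,dg',
\end{equation*}
in which the first factor is $\mathcal{C}^\infty$ in $(t, v_0, g_0, v', g')$ and the second is $\mathcal{C}^\infty$ in $(t, v', g', v, g)$ by the previous step. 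Differentiation under the integral to arbitrary order, justified by Gaussian-type tails and moment bounds inherited from the reflected OU process $G_t$ and the sub-linear growth of $J$ in $v$, yields joint $\mathcal{C}^\infty$ regularity in the full tuple. A parallel route, and arguably more robust with respect to the reflective boundary at $g=0$, is to invoke Malliavin calculus under the uniform Hörmander condition, which packages smoothness in starting and ending points into a single statement. Throughout, one must keep the integration away from the excluded sets $\{v' = V_E\}$ and $\{g' = 0\}$, but this causes no difficulty since both have measure zero and the densities are integrable.
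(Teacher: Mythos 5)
Your proposal follows essentially the same route as the paper: verify the (parabolic) H\"ormander condition with $X_1 = \partial_g$ and $[X_0,X_1] = \partial_g - (V_E-v)\partial_v$, so that the spanning condition holds precisely for $v\neq V_E$, then deduce interior smoothness in $(t,v,g)$ from the distributional forward Fokker--Planck equation and smoothness in $(v_0,g_0)$ from the backward Kolmogorov equation. Two remarks. First, the paper is more careful about the reflecting boundary at $g=0$: rather than asserting that the process ``locally coincides with an unconstrained diffusion,'' it applies It\^o's formula to test functions satisfying $\partial_g\phi(t,v,0)=0$, so the local-time integral vanishes identically and the interior distributional PDE is obtained rigorously; this is the precise mechanism behind your informal observation. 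Second, for joint smoothness in all five variables the paper simply defers to its cited reference (the analogue of their Theorem~4.3), whereas you propose the Chapman--Kolmogorov factorization; that is a workable strategy, but differentiating under that integral to arbitrary order requires uniform-in-$(v_0,g_0)$ bounds on $\partial^{\alpha}\widehat\rho_{t/2}^{v_0,g_0}(v',g')$ that are integrable in $(v',g')$, and such domination is not an automatic consequence of the local hypoellipticity argument -- it is exactly the technical content the paper outsources to the reference and that you gesture at with ``Gaussian-type tails and moment bounds'' without proof.
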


\begin{proof}
	The proof is essentially the same as \cite[Theorem~4.3]{foldes2021method}. The difference is that the state space of $(\widehat{V}_t,G_t)$ has a reflective boundary in $g$-direction. We only provide details to the continuity with respect to $(t,v,g)$ to demonstrate the modified calculation on the reflected OU process. 
 
    For every $T>0$ and every smooth function $\phi$ with $\partial_g\phi(t,v,0) = 0$ and with compact support in $\mathcal{C}^{\infty}_c((0,T)\times\mathbb{R}\times\mathbb{R}^+)$ , by It\^o's formula, we have
	\begin{equation} \label{e:ito-modified}
		0=\phi(T,\widehat{V}_T,G_T) =  \int_0^T(\partial_t + \mathcal{L})\phi(t,\widehat{V}_t,G_t) dt+ \int_0^T \partial_{g}\phi(t,\widehat{V}_t,G_t) d L_{t} + \sqrt{2} \int_0^T \partial_{g}\phi(t,\widehat{V}_t,G_t) d B_t.
	\end{equation}
	Again, note that the local time $L_t$ only increases when $G_t = 0$, the term with the local time process vanishes almost surely. Taking expectations on both sides of \eqref{e:ito-modified} and integrating by parts, we have
	\begin{equation*}
		\langle  \phi(t,v,g), (-\partial_t+\mathcal{L}^*) \widehat{\rho}^{v_0,g_0}_t(v,g) \rangle_{t,v,g} = 0.
	\end{equation*}
	By the choice of $\phi$,
	\begin{equation*}
		(\partial_t - \mathcal{L}^*) \widehat{\rho}^{v_0,g_0}_{t} = 0
	\end{equation*}
	holds in the domain $(0,T)\times\mathbb{R}\times\mathbb{R}^+$ in the distribution sense. By Lemma~\ref{lem:hypoelliptic}, $\partial_t - \mathcal{L}^*$ is hypoelliptic in $(0,T)\times(0,V_E)\times\mathbb{R}^ +$, which implies that the mapping $(t,v,g)\mapsto\widehat{\rho}^{v_0,g_0}_t(v,g)$ is smooth in the same domain. Therefore, the desired result holds since $T>0$ is arbitrary.

    Similarly, the continuity with respect to the initial data $(v_0,g_0)$ follows from the Kolmogorov backward equation in the distribution sense and the hypoellipticity of $\partial_t+\mathcal{L}$ given by Lemma~\ref{lem:hypoelliptic}. For the proof of the Kolmogorov backward equation, we can modify the details in \cite[Theorem~4.3]{foldes2021method} as above to treat the local time term caused by the reflection boundary in $g$-direction.
\end{proof}

Note that the SDE of $\widehat{V}_t$ has the same form as \eqref{sde-v} except that $V_t$ jumps to $0$ when approaching $1$. The two processes are the same before the first jump time $\tau_1$ of $V_t$ defined in \eqref{def:jump-time}. Hence we have
\begin{equation} \label{e:v_hat=v_tilde}
	\widehat{V}_{(t\wedge\tau_1)^-} = V_{(t\wedge\tau_1)^-}.
\end{equation}
Now we are going to prove Proposition \ref{prop:boundary-regularity}. As mentioned, the $g$-boundary regularity \eqref{regular-g=0} is standard for the reflected OU process (see e.g. \cite{ha2009applications,bookMR0192521}). It remains to show the $v$-boundary regularity \eqref{regular-v=0}.

\begin{proof}[Proof of Proposition \ref{prop:boundary-regularity}]
    We first note that for $t>0$, the event $\{V_t = 0\}$ is the disjoint union of the two events $\{V_{t^{-}}=1\}$ and $\{V_{t^{-}} = 0\}$. Hence, we have
    \begin{equation}\label{0-VF-tmp}
        \mathbb{P}\left({V_t=0}\right)=\mathbb{P}\left(V_{t^-}=1\right)+\mathbb{P}(V_{t^-}=0),\qquad t>0.
    \end{equation}
    If $V_{t^{-}}=0$, we can deduce that the evolution of $V$ follows the ODE \eqref{sde-v} in a left neighborhood of $t$. This implies $G_t=0$ since $J(0,g)> 0$ for every $g>0$. Hence $\{V_{t^{-}}=0\}\subseteq\{G_t=0\}$ which has probability zero due to \eqref{regular-g=0}. By \eqref{0-VF-tmp}, we get
    \begin{equation*}
	   \mathbb{P}\left(V_t = 0\right)= \mathbb{P}\left(V_{t^-}=1\right)= \sum_{k=1}^{\infty} \mathbb{P}(\tau_k = t),
    \end{equation*}
    where $\tau_k$ is the $k$-th jump time defined in \eqref{def:jump-time}. It remains to prove $\mathbb{P}(\tau_k = t)=0$ for every $k\in\mathbb{N}^+$.
    
    We first consider $k=1$. Since $\widehat{V}_t$ has the same trajectory as $V_t$ before $\tau_1$, by \eqref{e:v_hat=v_tilde}, we have
    \begin{equation*}
	   \mathbb{P}(\tau_1 = t) \leq \mathbb{P}(\widehat{V}_{t} = 1)\;.
    \end{equation*}
    Since $v=1$ is a interior point of $(0,V_E)$, Lemma~\ref{lem:density_continuity_1} implies
    \begin{equation*}
	   \mathbb{P}(\widehat{V}_{t} = 1) = \int_{\mathcal{X}} \mathbb{P}^{v_0,g_0}(\widehat{V}_{t} = 1) \mu_0(dv_0,dg_0) = 0,
    \end{equation*}
    where $\mathbb{P}^{v_0,g_0}$ is the probability measure conditioned on $(V_0,G_0)=(v_0,g_0)$. Hence, $\mathbb{P}(\tau_1=t) = 0$.
 
    Now we turn to $k\geq2$. Denote the distribution of $(\tau_1, G_{\tau_1})$ conditioned on $\{V_0=v,G_0=g\}$ by $\mathcal{T}^{v,g}$, which describes the joint distribution of the first jump time and location. By strong Markov property, the joint law of $(\tau_{j+1}-\tau_j,G_{\tau_{j+1}})$ conditioned on $(\tau_j, G_{\tau_j})$ does not depend on the value of $\tau_j$, and is exactly $\mathcal{T}^{0,G_{\tau_j}}$. Now we compute the probability $\mathbb{P}(\tau_k=t)$ by cutting the trajectory at time $\tau_1$. More precisely, we have
    \begin{equation*}
    \begin{split}
		\mathbb{P}(\tau_k = t) =&\int_{\mathcal{X}} \mathbb{P}^{v_0,g_0}(\tau_k=t) \mu_0(dv_0,dg_0)\\
        =&\int_{\mathcal{X}} \int_{0<t_1<t} \int_{g_1>0} \mathbb{P}(\tau_k-\tau_1=t-t_1|G_{\tau_1}=g_1) \mu_0(dv_0,dg_0) \mathcal{T}^{v_0,g_0}(dt_1,dg_1).
    \end{split}
    \end{equation*}
    Proceeding this procedure, we cut the trajectory at the time $\tau_j$ for $2 \leq j \leq k-1$ successively to get
    \begin{equation*}
		\begin{split}
        \mathbb{P}(\tau_k = t) =& \int_{\mathcal{X}}\int_{0<\sum_{j=1}^{k-1}t_j < t} \int_{\vec{g}\in(\mathbb{R}^+)^{k-1}} \mathbb{P}\bigg(\tau_k-\tau_{k-1} = t- \sum_{j=1}^{k-1}t_j \bigg| G_{\tau_{k-1}}=g_{k-1}\bigg)\\
		&  \qquad\qquad\mu_0(dv_0,dg_0) \mathcal{T}^{v_0,g_0}(dt_{1},dg_{1})\prod_{j=2}^{k-1} \mathcal{T}^{0,g_{j-1}}(dt_{j},dg_{j}),
		\end{split}
	\end{equation*} 
	where $\vec{g} = (g_1,g_2,\dots,g_{k-1})$. By strong Markov property and that $\mathbb{P}(\tau_1=t) = 0$, we get
	\begin{equation*}
		\mathbb{P}\bigg(\tau_k-\tau_{k-1} = t- \sum_{j=1}^{k-1}t_j \bigg| G_{\tau_{k-1}}=g_{k-1}\bigg) = \mathbb{P}^{0,g_{k-1}}\bigg(\tau_1 = t- \sum_{j=1}^{k-1}t_j \bigg) = 0
	\end{equation*}
	for $t-\sum_{j=1}^{k-1}t_j>0$. Therefore, we obtain $\mathbb{P}(\tau_k = t) = 0$ for every $k\in\mathbb{N}^+$.
\end{proof}

\subsection{Deriving the boundary conditions: Proof of Proposition \ref{prop:bc}}\label{sc:3.3-BC}

We have shown that the law of $(V_t,G_t)$ gives a weak solution to \eqref{eq:FP-PDE}-\eqref{ic-pde} defined in Definition \ref{def:weak-sol}. Now we start from a weak solution to derive explicitly the boundary conditions in the sense of Proposition \ref{prop:bc}. As a preparation, we need the following consequence from Definition~\ref{def:weak-sol}. In particular, it uses that $\mu_t$ is not singular at the boundaries $v=0$ and $v=1$.

\begin{lemma} \label{lem:local-bd-regularity}
Let $(\mu_t)_{t\geq0}$ be a weak solution to \eqref{eq:FP-PDE} and \eqref{bc-g-pde}-\eqref{ic-pde} in the sense of Definition \ref{def:weak-sol}. For $t>0$, denote the density of $\mu_t$ as $p(t,v,g)$. Let
\begin{equation*}
q(t,g):=\int_{0}^{1}p(t,v,g)dv=\lim_{\eps\rightarrow0^+}\int_{\eps}^{1-\eps}p(t,v,g)dv,\quad t>0,\, g>0.
\end{equation*}
be the marginal density in $g$. Then, for all $\psi(t,g)\in \mathcal{C}^{\infty}_c(\mathbb{R}^+\times\mathbb{R}^+)$, we have
\begin{equation}\label{FP-g-weak}
    	\int_0^{+\infty} \int_{0}^{+\infty} \big( \partial_t -(g-1)\p_g+ \p_{gg} \big) \psi(t,g) q(t,g) dt dg = 0.
\end{equation}
In other words, $q$ satisfies the equation
\begin{equation}\label{FP-g}
    \p_tq=\p_g((g-1)q)+\p_{gg}q\;, \quad (t,g) \in \mathbb{R}^{+} \times \mathbb{R}^{+}
\end{equation}
in the weak sense, which is the Fokker-Planck equation for the reflected OU process $G$. 
\end{lemma}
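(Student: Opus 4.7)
The plan is to test the weak formulation \eqref{lem:global_ito} against a function constant in $v$, namely $\phi(t,v,g) := \psi(t,g)$. The key idea is that such a $\phi$ makes the convection term $J\partial_v\phi$ in $\mathcal{L}\phi$ vanish identically, leaving only the $g$-Fokker--Planck operator, while integrating $p$ in $v$ produces exactly the marginal $q$.

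First I would verify $\phi\in\mathbf{\Phi}$. The periodicity $\phi(t,0,g)=\phi(t,1,g)$ holds trivially because $\phi$ does not depend on $v$; the Neumann condition $\partial_g\phi(t,v,0)=\partial_g\psi(t,0)=0$ follows since $\psi\in\mathcal{C}^\infty_c(\mathbb{R}^+\times\mathbb{R}^+)$ is supported away from $g=0$; and the support is compact in $(t,g)$ by assumption on $\psi$, while $v$ ranges over the bounded set $[0,1)$ (the periodic identification of endpoints making $\phi$ smooth there). Plugging $\phi$ into \eqref{lem:global_ito} and choosing $T$ so large that $\psi(T,\cdot)\equiv 0$ (the initial term also vanishes since $\psi$ is supported away from $t=0$), both boundary terms drop out and we obtain
\begin{equation*}
    0 = \int_0^T\!\int_{\mathcal{X}}(\partial_t+\mathcal{L})\phi(t,v,g)\,\mu_t(dv,dg)\,dt.
\end{equation*}
Using $\partial_v\phi=0$, we have $\mathcal{L}\phi=\partial_{gg}\psi+(1-g)\partial_g\psi$. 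By item 1 of Definition \ref{def:weak-sol}, $\mu_t$ admits a density $p(t,v,g)$ for every $t>0$, so Fubini allows integrating first in $v$, replacing $\int_0^1 p(t,v,g)\,dv$ by $q(t,g)$. The identity becomes exactly \eqref{FP-g-weak}, after noting $-(g-1)\partial_g=(1-g)\partial_g$. The PDE \eqref{FP-g} in the distributional sense follows by the standard duality $\langle q,\mathcal{L}_g^*\psi\rangle=\langle \mathcal{L}_g q,\psi\rangle$.

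The only genuinely delicate point is the admissibility of $\phi(t,v,g)=\psi(t,g)$, since a strict literal reading of $\mathcal{C}^\infty_c([0,+\infty)\times\mathcal{X})$ with $\mathcal{X}=[0,1)\times[0,+\infty)$ could be interpreted as requiring the support to be bounded away from $v=1$. Under the natural interpretation --- where the condition $\phi(t,0,g)=\phi(t,1,g)$ forces smooth extension across the periodic identification $v=0\sim v=1$ --- this is a non-issue. If one insists on the strict reading, one would approximate $\phi$ by $\chi_\eta(v)\psi(t,g)$ with $\chi_\eta\in\mathcal{C}^\infty([0,1])$, $\chi_\eta(0)=\chi_\eta(1)=0$, and $\chi_\eta\to 1$ pointwise on $(0,1)$; the limit then involves the spurious boundary contribution $\int J(v,g)\chi_\eta'(v)\psi(t,g)p(t,v,g)\,dv\,dg\,dt$, whose vanishing is essentially equivalent to the boundary matching condition \eqref{bc-v-2-pde}. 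Since Lemma \ref{lem:local-bd-regularity} is intended as a clean preliminary used later to derive Proposition \ref{prop:bc}, the natural interpretation that makes $\phi=\psi$ directly admissible is clearly the one intended, and I would proceed without the cutoff.
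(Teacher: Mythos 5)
Your proposal is correct and follows essentially the same route as the paper's own proof: test \eqref{lem:global_ito} against $\phi(t,v,g)=\psi(t,g)$, observe that $J\partial_v\phi=0$, use compact support of $\psi$ in $(0,\infty)\times(0,\infty)$ to kill the $t=T$ and $t=0$ boundary terms, and integrate out $v$ to obtain \eqref{FP-g-weak}. Your added scrutiny of why $\phi=\psi$ lies in $\mathbf{\Phi}$ is consistent with the intended reading (the condition $\phi(t,0,g)=\phi(t,1,g)$ presupposes a smooth extension to $v=1$), and the paper itself asserts admissibility directly; no cutoff in $v$ is needed.
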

\begin{proof}
By Definition~\ref{def:weak-sol}, for every $t>0$, $\mu_t$ is absolutely continuous with respect to the Lebesgue measure with density $p$. Therefore, we can rewrite \eqref{lem:global_ito} as
\begin{equation}\label{global_ito-density}
    \int_{\mathcal{X}}\phi(T,v,g)p(T,v,g)dvdg = \int_{\mathcal{X}}\phi(0,v,g)\mu_0(dv,dg) + \int_0^T \int_{\mathcal{X}} (\partial_t + \mathcal{L})\phi(t,v,g)p(t,v,g) dtdvdg.
\end{equation}
Note that for $\psi: (t,g) \mapsto \psi(t,g)$ in $C_{c}^{\infty}(\mathbb{R}^{+} \times \mathbb{R}^{+})$, the function $\phi(t,v,g):=\psi(t,g)$ belongs to the class of test function $\Phi$ defined in \eqref{class-test-function}. Moreover, as $\psi$ has compact support in $\mathbb{R}^+\times\mathbb{R}^+$, we can choose $T$ large enough such that the first two terms in \eqref{global_ito-density} both vanish, which gives
\begin{equation*}
    \int_0^T \int_{\mathcal{X}} (\partial_t -(g-1)\p_g+ \p_{gg})\psi(t,g)p(t,v,g) dtdvdg.
\end{equation*}
The result then follows from integrating out the $v$ variable.
\end{proof}

Now we begin the proof of Proposition \ref{prop:bc}. 

\begin{proof}[Proof of Proposition \ref{prop:bc}]  
By Proposition \ref{prop-regularity-weak}, the solution $p$ to \eqref{eq:FP-PDE} is smooth in $(0,+\infty) \times (0,t) \times (0,\infty)$. Hence, we can integrate \eqref{eq:FP-PDE} for $v\in(\eps_1,1-\eps_2)\subset(0,1)$ to get 
\begin{equation} \label{integrate-v}
    \begin{split}
            \phantom{11}&\p_t\left(\int_{\eps_1}^{1-\eps_2}p(t,v,g)dv\right)+ \big( J(v,g)p(t,v,g) \big) \big|_{v=\eps_1}^{v=1-\eps_2}\\
            &=\p_g\left((g-1)\int_{\eps_1}^{1-\eps_2}p(t,v,g)dv\right)+\p_{gg}\left(\int_{\eps_1}^{1-\eps_2}p(t,v,g)dv\right), \quad t>0, g>0.
    \end{split}
\end{equation}
We now take the limit $\eps_1,\eps_2\rightarrow0^+$. Formally, in view of Lemma~\ref{lem:local-bd-regularity} and the boundary condition \eqref{bc-v-2-pde}, we expect (the two sides of) \eqref{integrate-v} to converge to those of \eqref{FP-g}. 

To justify this, we need to move the derivatives to test functions, and the boundary condition is obtained in a weak sense. More precisely, multiply \eqref{integrate-v} with a test function $\psi(t,g)\in \mathcal{C}^{\infty}_c(\mathbb{R}^+\times\mathbb{R}^+)$, and integrate by parts for $(t,g)\in\mathbb{R}^+\times\mathbb{R}^+$. Then we have
\begin{equation}\label{flux-integrate}
    \int_0^{+\infty}\int_0^{+\infty} \big(J(v,g)p(t,v,g) \big) \big|_{v=\eps_1}^{v=1-\eps_2}\psi(t,g)dtdg=B_{\psi}(\eps_1,\eps_2),
\end{equation}
where
\begin{equation}\label{Bpsi}
    \begin{aligned}
        B_{\psi}(\eps_1,\eps_2)=    \int_0^{+\infty}\int_0^{+\infty}[(\p_t+\p_{gg}-(g-1)\p_g)\psi(t,g)] \left(\int_{\eps_1}^{1-\eps_2}p(t,v,g)dv\right)dtdg.
    \end{aligned}
\end{equation}
Consider the limit $\eps_1,\eps_2\rightarrow0^+$ in \eqref{Bpsi}. By dominated convergence and Lemma~\ref{lem:local-bd-regularity}, we get
\begin{equation*}
  \lim_{\eps_1,\eps_2\rightarrow0^+}B_{\psi}(\eps_1,\eps_2)=\int_0^{+\infty}\int_0^{+\infty}[(\p_t+\p_{gg}-(g-1)\p_g)\psi(t,g)] q(t,g)dtdg=0,
\end{equation*}
where in the last equality we use that $q$ satisfies \eqref{FP-g-weak}. Hence, in view of \eqref{flux-integrate}, we deduce
\begin{equation}\label{flux-integrated-limit}
    \lim_{\eps_1,\eps_2\rightarrow0^+}\int_{\mathbb{R}^+\times\mathbb{R}^+}\left[ \big(J(v,g)p(t,v,g) \big)|_{v=\eps_1}^{v=1-\eps_2}\right]\psi(t,g)dtdg=0,\quad\quad \forall \psi\in \mathcal{C}^{\infty}_c(\mathbb{R}^+\times\mathbb{R}^+).
\end{equation}
We claim that, as a consequence of \eqref{flux-integrated-limit}, for every $\psi\in \mathcal{C}^{\infty}_c(\mathbb{R}^+\times\mathbb{R}^+)$, the two limits in \eqref{bc-v-weak} exist and are equal. Indeed, for fixed $\psi$, denote
\begin{align*}
    C_{1}(\eps_1)&:=\int_{{\mathbb{R}^+\times\mathbb{R}^+}}J(\eps_1,g)p(t,\eps_1,g)\psi(t,g)dtdg,\\
    C_{2}(\eps_2)&:=\int_{{\mathbb{R}^+\times\mathbb{R}^+}}J(1-\eps_1,g)p(t,1-\eps_2,g)\psi(t,g)dtdg.
\end{align*} Then \eqref{flux-integrated-limit} reads
\begin{equation}\label{flux-integrated-limit-c1c2}
    \lim_{\eps_1,\eps_2\rightarrow0^+}\left(C_{2}(\eps_2)-C_{1}(\eps_1)\right)=0.
\end{equation}
Since the above limit exists as $\eps_1, \eps_2 \rightarrow 0$ in arbitrary ways, we deduce that both $C_1(\eps_1)$ and $C_2(\eps_2)$ converge to finite numbers as $\eps_1, \eps_2 \rightarrow 0$, and their limits agree. This proves \eqref{bc-v-weak}.

Finally, suppose the support of $\psi(t,g)$ is in $(0,+\infty)\times(0,g_F)$ and $\psi\geq 0$. Recall $g_F$ is the unique zero of $J(1,g)$ on $(0,+\infty)$. Then, noting that $J(0,g)>0$ and $J(1,g)<0$ for $g\in(0,g_F)$, we derive from \eqref{bc-v-weak} that
\begin{equation}\label{flux-integrated-limit-tmp-zero}
0\leq \lim_{v\rightarrow0^+}\int_{\mathbb{R}^+\times\mathbb{R}^+}J(v,g)p(t,v,g)\psi(t,g)dtdg=\lim_{v\rightarrow1^-}\int_{\mathbb{R}^+\times\mathbb{R}^+}J(v,g)p(t,v,g)\psi(t,g)dtdg\leq 0,
\end{equation} since $p\geq 0$ and $\psi\geq 0$. Hence the equalities in \eqref{flux-integrated-limit-tmp-zero} hold, which implies both limits are zero. This proves Proposition~\ref{prop:bc}.
\end{proof}

\section{Exponential convergence}\label{sc:4-convergence}

In this section, we will show the exponential ergodicity of the Markov process $(V_t, G_t)$. By Harris' theorem, the exponential convergence follows from the Lyapunov function structure and the minorization condition. The following is a version of Harris theorem given in \cite[Theorem~1.3]{Hairer_Mattingly_Harris}.

\begin{theorem} \label{thm:general_harris}
    Let $\mathcal{P}$ be the Markov operator of a discrete-time Markov process on a measurable space $\mathcal{X}$, and $\mathcal{P}^*$ denotes its adjoint. Suppose the following two assumptions hold.
    \begin{enumerate}
    \item (Lyapunov function)
        There exists $W: \mathcal{X} \mapsto [0,+\infty)$ and $\alpha_1\in(0,1), \alpha_2>0$ such that
        \begin{equation} \label{e:Lyapunov}
		  (\mathcal{P}W)(x) \leq \alpha_1 W(x) + \alpha_2
        \end{equation}
        for every $x\in\mathcal{X}$.
        \item (Minorization condition)
        There exist $\eta>0$, $R > \frac{2 \alpha_2}{1-\alpha_1}$ and a probability measure $\nu$ on $\mathcal{X}$ such that
        \begin{equation}
		  \inf_{x\in \mathcal{C}(R)} \mathcal{P}^* \delta_{x} \geq \eta \nu\;,
        \end{equation}
        where $\mathcal{C}(R):= \{x: W(x)\leq R\}$ and $\delta_{x}$ is the Dirac measure at the point $x$. 
    \end{enumerate}
    Then there exists a unique stationary distribution $\pi$ with respect to the semi-group $\mathcal{P}$. Furthermore, there exist $\beta>0$ and $\theta \in (0,1)$ such that we have the exponential convergence
    \begin{equation*} 
		\| (\mathcal{P}^*)^n \mu  - \pi \|_{\beta} \leq  \theta^{n} \| \mu - \pi \|_{\beta}
	\end{equation*}
    for every initial distribution $\mu$ and every $n \geq 1$, where
    \begin{equation} \label{e:norm}
	   \| \nu_1-\nu_2 \|_{\beta} := \int_{\mathcal{X}} (1+\beta W(x))|\nu_1-\nu_2|(dx).
    \end{equation}
    is the total variation distance norm weighted by the Lyapunov function $W$ and constant $\beta > 0$. 
\end{theorem}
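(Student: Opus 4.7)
The plan is to follow the Hairer--Mattingly route and prove that $\mathcal{P}^*$ is a strict contraction in $\|\cdot\|_\beta$ for a suitably chosen $\beta>0$. Once the single-step contraction
$$\|\mathcal{P}^*\mu_1-\mathcal{P}^*\mu_2\|_\beta\leq \theta\|\mu_1-\mu_2\|_\beta$$
is established for all probability measures with finite $W$-moment, iteration yields the stated exponential decay. Existence and uniqueness of $\pi$ follow from the Banach fixed point theorem applied to $\mathcal{P}^*$ on the complete metric space $\{\mu\text{ prob. measure}:\mu(W)<\infty\}$ endowed with $\|\cdot\|_\beta$; tightness of the iterates $(\mathcal{P}^*)^n\delta_{x_0}$, needed to guarantee a fixed point exists in this space, is furnished directly by the Lyapunov bound \eqref{e:Lyapunov}, which iterates to $\mathcal{P}^n W(x)\leq \alpha_1^n W(x)+\alpha_2/(1-\alpha_1)$.

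The core step is a coupling argument. For any two points $x,y\in\mathcal{X}$, I would construct a coupling $\Gamma_{x,y}$ of $\mathcal{P}^*\delta_x$ and $\mathcal{P}^*\delta_y$ and show that the semi-distance $d_\beta(x,y):=(2+\beta W(x)+\beta W(y))\mathbf{1}_{x\neq y}$ contracts:
$$\int d_\beta(x',y')\,\Gamma_{x,y}(dx',dy')\leq \theta\,d_\beta(x,y).$$
There are two regimes. If $x,y\in\mathcal{C}(R)$, use the minorization condition to decompose $\mathcal{P}^*\delta_x=\eta\nu+(1-\eta)Q_x$ and likewise for $y$; couple the $\nu$-parts diagonally (so $x'=y'$ there, contributing nothing to $d_\beta$) and take an independent coupling on the remainders, which collapses the constant $2$ in $d_\beta$ by a factor $1-\eta$, while the $\beta W$ contribution is controlled by $\beta(\mathcal{P}W(x)+\mathcal{P}W(y))\leq \alpha_1\beta(W(x)+W(y))+2\alpha_2\beta$. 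If instead $W(x)+W(y)>2R$, discard the coupling structure entirely and use \eqref{e:Lyapunov}: since $2\alpha_2<(1-\alpha_1)R$, the Lyapunov part strictly dominates both the constant $2$ and its own offset $2\alpha_2\beta$, giving contraction of $d_\beta$ by a factor strictly less than $1$. Taking the maximum of the two contraction rates, and then choosing $\beta$ small enough that $(1-\eta)(2+2\beta R)+2\alpha_2\beta<2+\beta\gamma\cdot 2R$ for an appropriate $\gamma\in(\alpha_1,1)$ balances both regimes.

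Finally, I would pass from the pointwise bound on $d_\beta$ to the functional bound on $\|\cdot\|_\beta$ via Kantorovich--Rubinstein duality: the norm $\|\mu_1-\mu_2\|_\beta$ equals $\sup\{\int\phi\,d(\mu_1-\mu_2):|\phi(x)-\phi(y)|\leq d_\beta(x,y)\}$ up to a harmless normalization, so integrating the coupling inequality against an optimal coupling of $\mu_1,\mu_2$ yields the desired global contraction. The main obstacle is the quantitative balancing in the previous paragraph: one must verify that a single $\beta$ simultaneously makes the minorization contraction on $\mathcal{C}(R)\times\mathcal{C}(R)$ and the Lyapunov contraction outside it both strict, and it is precisely the strict inequality $R>2\alpha_2/(1-\alpha_1)$ that opens the window in which such a $\beta$ exists. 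Once $\theta<1$ and $\beta>0$ are fixed, the exponential ergodicity and the uniqueness of $\pi$ follow by standard iteration.
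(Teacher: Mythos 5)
The paper does not prove this theorem: it is quoted verbatim from \cite[Theorem~1.3]{Hairer_Mattingly_Harris}, so there is no in-paper argument to compare against. Your sketch is a faithful reproduction of the Hairer--Mattingly proof: realize $\|\cdot\|_\beta$ as the Wasserstein distance for the semi-metric $d_\beta(x,y)=(2+\beta W(x)+\beta W(y))\mathbf{1}_{x\neq y}$, prove a one-step contraction by coupling, and use $R>2\alpha_2/(1-\alpha_1)$ to open the window in which a single $\beta$ works in both regimes. So the approach is the intended one.

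One concrete slip worth fixing: your two regimes, ``$x,y\in\mathcal{C}(R)$'' and ``$W(x)+W(y)>2R$'', do not cover all pairs (e.g.\ $W(x)=\tfrac32 R$, $W(y)=0$ falls in neither). The standard split is on $S:=W(x)+W(y)\geq R$ versus $S<R$: since $W\geq 0$, the latter forces both points into $\mathcal{C}(R)$, while in the former regime the Lyapunov bound \eqref{e:Lyapunov} gives $2+\beta(\alpha_1 S+2\alpha_2)\leq 2+\beta\gamma_0 S$ for any $\gamma_0\in\bigl(\alpha_1+2\alpha_2/R,\,1\bigr)$ --- a nonempty interval precisely because $R>2\alpha_2/(1-\alpha_1)$ --- and the ratio $(2+\beta\gamma_0 S)/(2+\beta S)$ is bounded away from $1$ uniformly over $S\geq R$. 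A minor further remark: no separate tightness argument is needed for the existence of $\pi$; completeness of the set of probability measures with finite $W$-moment under $\|\cdot\|_\beta$ together with the contraction already yields the fixed point, and the Lyapunov bound is only needed to check that $\mathcal{P}^*$ maps this set into itself.
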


Applying Harris' theorem to our model, we obtain the main result on the exponential ergodicity of the process $(V_t,G_t)$.

\begin{theorem}
\label{thm:harris}
    Let $(\pP_t)_{t \geq 0}$ be the Markov semigroup for the process $(V_t, G_t)$, and $(\pP_t^*)_{t \geq 0}$ be its adjoint. Then, there exists a unique stationary distribution $\pi$ with respect to $(\pP_t)_{t \geq 0}$. Furthermore, there exist $\beta, C, \lambda > 0$ such that
    \begin{equation} \label{e:harris}
        \|\pP_t^* \mu_0 - \pi\|_{\beta} \leq C e^{-\lambda t} \|\mu_0 - \pi\|_{\beta}
    \end{equation}
    for every initial distribution $\mu_0$ on $\mathcal{X}$ and every $t>0$, where $\|\cdot\|_{\beta}$ is the weighted total variation distance norm defined in \eqref{e:norm}. 
\end{theorem}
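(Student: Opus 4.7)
The strategy is to reduce the continuous-time statement to the discrete-time Harris theorem (Theorem~\ref{thm:general_harris}) by fixing a deterministic time horizon $T > 0$ and applying it to the sampled Markov chain with transition operator $\pP_T$. Exponential convergence for the semigroup at integer multiples of $T$ then upgrades to \eqref{e:harris} for all $t \geq 0$ at the cost of worsening the constant $C$, via the standard trick of writing $t = nT + s$ with $s \in [0, T)$ and using that $\pP_s$ is a contraction for $\|\cdot\|_\beta$ up to multiplicative constants depending on the Lyapunov drift. So the real work is to verify the Lyapunov drift condition \eqref{e:Lyapunov} and the minorization condition for $\pP_T$.

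\textbf{Step 1: Lyapunov function.} The natural candidate is $W(v, g) := 1 + g^2$ (or more generally $1 + g^k$), which depends only on $g$. Since $G_t$ evolves independently of $V_t$ as a reflected Ornstein--Uhlenbeck process on $\mathbb{R}^+$, It\^o's formula applied to $G_t^2$ gives
\begin{equation*}
    d(G_t^2) = \bigl(-2G_t(G_t - 1) + 2\bigr) dt + 2\sqrt{2}\, G_t\, dB_t + 2 G_t \, dL_t,
\end{equation*}
and the local time contribution vanishes because $L_t$ only increases when $G_t = 0$. Taking expectation and applying Gr\"onwall gives an exponential contraction of $\mathbb{E}^{v_0, g_0}[G_T^2]$ towards a constant, from which $(\pP_T W)(v, g) \leq \alpha_1 W(v, g) + \alpha_2$ with $\alpha_1 \in (0, 1)$ follows provided $T$ is chosen large enough. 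The sublevel set $\mathcal{C}(R) = \{(v, g) : 1 + g^2 \leq R\}$ is then the strip $[0, 1) \times [0, \sqrt{R - 1}]$, which is relatively compact in $\mathcal{X}$ modulo the boundary $v = 0, 1$.

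\textbf{Step 2: Minorization condition.} This is the substantive difficulty. We need $\eta > 0$, a probability measure $\nu$ on $\mathcal{X}$, and $T > 0$ such that
\begin{equation*}
    \pP_T^* \delta_{(v_0, g_0)} \geq \eta \nu \qquad \text{for every } (v_0, g_0) \in \mathcal{C}(R).
\end{equation*}
The plan is to build $\nu$ as the uniform measure on a well-chosen ``target rectangle'' $A = [v_*, v_* + \delta_v] \times [g_*, g_* + \delta_g]$ in the interior of $\mathcal{X}$, and to exhibit a uniformly positive probability that $(V_T, G_T) \in A$ with an absolutely continuous conditional law whose density is bounded below by a constant independent of the starting point $(v_0, g_0) \in \mathcal{C}(R)$. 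Two probabilistic ingredients combine:
\begin{itemize}
\item Use the support/control theorem (or an explicit Girsanov calculation) for the driving Brownian motion $B_t$ to force $G_t$ to follow, with probability bounded below uniformly in $g_0 \in [0, \sqrt{R-1}]$, a prescribed smooth trajectory that stays in a fixed compact interval $[g_*, g_* + \delta_g] \subset (g_F, \infty)$ for the entire time window $[T/2, T]$.
\item Given such a $G$-path, the dynamics of $V_t$ is driven by the ODE \eqref{ode-v-rewrite} punctuated by resets at $V = 1$. Because $g$ is pinned above $g_F$ for a long enough window, the reset mechanism is activated and successive spikes force $V_t$ to sweep across $[0, 1)$; tracking the map $G_{\tau_k} \mapsto V_T$ (or, equivalently, using the smoothness from Hörmander's theorem applied to $\hat V_t$ as in Lemma~\ref{lem:density_continuity_1}) shows that the conditional law of $V_T$ has a density, smooth and bounded below on the target interval $[v_*, v_* + \delta_v]$, uniformly over $v_0 \in [0, 1)$ and admissible $G$-paths.
\end{itemize}
Combining, the joint law $\pP_T^* \delta_{(v_0, g_0)}$ restricted to $A$ admits a density bounded below by a positive constant $\eta_0$ independent of $(v_0, g_0) \in \mathcal{C}(R)$; setting $\nu = |A|^{-1} \mathbf{1}_A\, dv\, dg$ and $\eta = \eta_0 |A|$ yields the minorization. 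Note that choosing $T$ large and $R$ accordingly large (so that $R > 2\alpha_2 / (1 - \alpha_1)$) is compatible with both steps.

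\textbf{Main obstacle.} The Lyapunov bound is essentially routine since the $G$-marginal decouples. The genuine difficulty is the uniform lower bound on the density of $V_T$ in the minorization step: the renewal mechanism \eqref{sde-bc-v} breaks the standard Hörmander-based argument at the boundary $v = 1$, and the velocity field $J(v, g)$ is non-separable so that the $v$-dynamics is strongly coupled to the realized $G$-path. The resolution is precisely what the probabilistic reformulation was designed to enable: we freeze a good $G$-path of positive probability, then argue pathwise in $v$ using the ODE \eqref{ode-v-rewrite} and the jump rule to spread mass across $[0, 1)$, bypassing the boundary singularity that obstructs a purely PDE-based verification of the minorization.
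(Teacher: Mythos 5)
Your overall framework is the same as the paper's: reduce to the discrete-time Harris theorem applied to $\pP_T$ for a fixed $T>0$, verify a Lyapunov drift condition with a function of $g$ only (your $W=1+g^2$ versus the paper's $(g-1)^2$ — both work by essentially the same Itô computation), verify a minorization condition on a sublevel set, and then extend from times $nT$ to all $t>0$ by absorbing the remainder $s\in[0,T)$ into the constant $C$ using the drift bound. The Lyapunov step and the continuous-time extension are correct and match the paper.

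However, your minorization argument has a genuine gap, and in fact goes in the \emph{opposite} direction from the paper at the decisive point. You propose to pin $G$ in a compact interval \emph{above} $g_F$ so that the reset mechanism fires repeatedly and the spikes ``sweep $V_T$ across $[0,1)$.'' Two problems. First, conditioned on a realized $G$-path, the $V$-trajectory solves a deterministic ODE with deterministic reset times, so the conditional law of $V_T$ is a Dirac mass — it has no density and certainly no lower bound; the density must come from the variability across $G$-paths, which is precisely the hypoelliptic mechanism you then need to invoke. Second, the hypoelliptic smoothness you cite (Lemma~\ref{lem:density_continuity_1}) is for the \emph{jump-free} auxiliary process $\widehat V_t$, and it does not apply across a reset at $v=1$; but pinning $G$ above $g_F$ guarantees resets, so you cannot use Lemma~\ref{lem:density_continuity_1} in that regime. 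The paper's Proposition~\ref{lem:minorisation} resolves this by doing the reverse: it chooses the interior stable zero $(v^*,g^*)=\bigl(\tfrac12,\tfrac{g_L}{2V_E-1}\bigr)$ of $J$, which satisfies $g^*<g_F$ (since $2V_E-1>V_E-1$), uses the support theorem to confine $G$ near $g^*$ over $[1,T_1]$ so that the vector field $J$ \emph{traps} $(V_t,G_t)$ in a small interior rectangle $\mathcal{N}$ away from both $v$-boundaries with no firing, and only then runs a short jump-free Hörmander window of length $T_2$ from $\mathcal{N}$ (Lemma~\ref{lem:density_continuity_2}) to obtain a uniform lower bound on the density of the absorbed process $(\widetilde V_{T_2},G_{T_2})$. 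In short, the jumps are avoided, not exploited. Your sketch would need a substantially different argument — for instance, tracking the pushforward of the Brownian noise through the jump timing — to make the ``sweeping across $[0,1)$'' idea rigorous, and as written it does not close.
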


The exponential decay \eqref{e:harris} will first be proved along a subsequence $t_n = n T$ for some fixed $T>0$ (to be specified later in Proposition~\ref{lem:minorisation}), and then extended to all $t>0$. To prove it along such a subsequence, we will verify the assumptions in Theorem~\ref{thm:general_harris} for the Markov operator $\mathcal{P}_{T}$.

\subsection{Lyapunov function structure} 

Since the domain $\mathcal{X}$ given by \eqref{e:x} is bounded in $v$-direction, the Lyapunov function can be chosen to depend on $g$ only.

\begin{lemma} \label{lem:Lyapunov}
	For every $T>0$, the function $W(v,g):=(g-1)^2$ {is a Lyapunov function with respect to the Markov operator $\mathcal{P}_T$, as it satisfies Assumption~1 in Theorem~\ref{thm:general_harris} }.
\end{lemma}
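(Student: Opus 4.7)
The key observation is that $W(v,g) = (g-1)^2$ depends only on $g$, and the $G$-dynamics in \eqref{sde-g} evolves autonomously from $V_t$. Therefore the claim reduces to proving the one-dimensional estimate
\begin{equation*}
    \mathbb{E}\bigl[(G_T-1)^2 \,\big|\, G_0 = g\bigr] \leq \alpha_1 (g-1)^2 + \alpha_2
\end{equation*}
for the reflected OU process $G$, with some $\alpha_1 \in (0,1)$ and $\alpha_2 > 0$ depending only on $T$.

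The plan is to apply It\^o's formula to the function $f(g) = (g-1)^2$ along the reflected OU trajectory. Using \eqref{sde-g} and $f''(g) = 2$, I would compute
\begin{equation*}
    d\bigl((G_t - 1)^2\bigr) = -2(G_t - 1)^2 \, dt + 2\sqrt{2}(G_t - 1)\, dB_t + 2(G_t - 1)\, dL_t + 2\, dt\;.
\end{equation*}
The crucial favorable feature is the local time term: since $L_t$ increases only at times when $G_t = 0$, the integral $\int_0^t (G_s - 1)\, dL_s = -L_t \leq 0$. Taking expectations (the stochastic integral is a true martingale because $(G_t-1)$ has finite second moment uniformly on $[0,T]$, which follows from a standard truncation and reflected OU moment bound), we obtain the differential inequality
\begin{equation*}
    \frac{d}{dt}\,\mathbb{E}\bigl[(G_t-1)^2\bigr] \leq -2\,\mathbb{E}\bigl[(G_t-1)^2\bigr] + 2\;.
\end{equation*}

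Applying Gr\"onwall's inequality then gives
\begin{equation*}
    \mathbb{E}\bigl[(G_T-1)^2 \,\big|\, G_0 = g\bigr] \leq e^{-2T}(g-1)^2 + (1 - e^{-2T})\;,
\end{equation*}
which is exactly \eqref{e:Lyapunov} for the semigroup at time $T$ with $\alpha_1 = e^{-2T} \in (0,1)$ and $\alpha_2 = 1 - e^{-2T} > 0$. The only technical point I expect to require care is the justification of It\^o's formula at the reflecting boundary $g=0$ and the martingale property of the Brownian integral, both of which are standard for reflected OU processes (see the references cited after \eqref{sde-g}); in particular the sign of the local time contribution is exactly what makes $W$ a Lyapunov function without any further work.
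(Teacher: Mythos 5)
Your proposal is correct and follows essentially the same route as the paper: apply It\^o's formula to $(G_t-1)^2$, observe that the local-time integral $\int_0^t (G_s-1)\,dL_s = -L_t \leq 0$ because $L$ grows only on $\{G_s=0\}$, take expectations to get $\frac{d}{dt}\mathbb{E}[(G_t-1)^2] \leq -2\,\mathbb{E}[(G_t-1)^2]+2$, and close with Gr\"onwall to obtain $\alpha_1=e^{-2T}$, $\alpha_2=1-e^{-2T}$. The paper phrases the It\^o step via the semimartingale formula \eqref{Ito-semi} with $\phi(t,v,g)=(g-1)^2$, but the computation and conclusion are identical to yours.
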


\begin{proof}
Fix $(v_0,g_0)\in\mathcal{X}$. Define
    \begin{equation*}
        f(t):=\mathbb{E}^{(v_0,g_0)}W(V_t,G_t)=\mathbb{E}^{(v_0,g_0)}(G_t-1)^2.
    \end{equation*}
Recall It\^o's formula for semi-martingales given in \eqref{Ito-semi}. Taking $\phi(t,v,g) = W(v,g) =(g-1)^2$, we obtain
\begin{equation*}
    \frac{1}{2} \big((G_t-1)^2 - (G_0-1)^2 \big) = \int_0^t \big( 1-(G_s-1)^2 \big) ds + \sqrt{2} \int_0^t (G_s-1) dB_s + \int_0^t (G_s-1) dL_s.
\end{equation*}
Taking expectations on both sides, we get
\begin{equation} \label{e:ito_semi_lyapunov}
    \frac{1}{2} \big(f(t) - f(0) \big) = t - \int_0^t f(s) ds + \mathbb{E}^{(v_0,g_0)}\bigg[\int_0^t (G_s-1) dL_s\bigg].
\end{equation}
Recall that $L_s$ changes its value only when $G_s=0$, so we have
\begin{equation*}
    \int_{0}^{t} (G_s - 1) d L_s = - L_t\;.
\end{equation*}
Differentiating with respect to $t$ and using that $L$ is increasing, we get
\begin{equation*}
    f'(t) \leq 2 - 2 f(t)\;.
\end{equation*}
Hence by Gronwall's inequality, we conclude 
\begin{equation} \label{e:lyapunov}
    \mathbb{E}^{(v_0,g_0)}W(V_t,G_t)=f(t) \leq e^{-2t}f(0)+(1-e^{-2t}) =e^{-2t}W(v_0,g_0)+(1-e^{-2t}),
\end{equation}
which verifies \eqref{e:Lyapunov}.
\end{proof}

\subsection{Minorization condition}

Now we are going to check Assumption~2 in Theorem~\ref{thm:general_harris}. Recall that we choose $W(v,g):=(g-1)^2\rightarrow+\infty$ as $g\rightarrow+\infty$., then for $R\geq 1$, we have
\begin{equation} \label{e:c}
    \mathcal{C}(R) = \{(v,g):W(v,g)\leq R\} = [0,1)\times[0,M(R)],
\end{equation}
where $M(R):=\sqrt{R}+1$. The minorization condition is a direct corollary of the following proposition. 
\begin{proposition}
	\label{lem:minorisation}
	There exists $T>0$ such that for every $R\geq1$, there exists a constant $\eta(R)>0$ and a probability measure $\nu$ on $\mathcal{X}$ such that
	\begin{equation} \label{e:lower_bound}
		\inf_{(v_0,g_0)\in \mathcal{C}} \mathbb{P}^{v_0,g_0}((V_T,G_T)\in A) \geq \eta \nu(A)
	\end{equation}
	for every Borel measurable $A\subset\mathcal{X}$, {where $\mathcal{C}$ is given in \eqref{e:c}}.
\end{proposition}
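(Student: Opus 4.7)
The plan is to exploit the jump/reset mechanism \eqref{sde-bc-v} to erase the dependence of the law of $(V_T,G_T)$ on the initial voltage $v_0$: once at least one spike has occurred, $V$ is reset to $0$, after which the dynamics forget $v_0$ entirely. Combined with the hypoellipticity-based smoothness already established in Lemma~\ref{lem:density_continuity_1} for the jump-free auxiliary process, this should produce a smooth transition density admitting a uniform lower bound on some open set as $(v_0,g_0)$ ranges over $\mathcal{C}$.

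First, I would establish a uniform lower bound on the probability that at least one spike occurs in $[0,T/2]$. Whenever $G_t\geq g^*$ for some fixed $g^*>g_F$, the velocity $J(V_t,G_t)$ is bounded below on $[0,1)$ by a positive constant depending only on $g^*$, so by the ODE \eqref{ode-v-rewrite} the process $V_t$ reaches $1$ within a fixed deterministic time $t^*$. It then suffices to show that, uniformly in $g_0\in[0,M(R)]$, the reflected OU process $G_t$ stays above $g^*$ on some subinterval of $[0,T/2]$ of length $t^*$ with positive probability; this is a standard small-ball/support-theorem statement for the reflected OU process. Choosing $T$ large enough, this yields $\eta_1(R)>0$ with $\mathbb{P}^{v_0,g_0}(\tau_1\leq T/2)\geq \eta_1(R)$ uniformly over $(v_0,g_0)\in\mathcal{C}$.

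Second, I would apply the strong Markov property at $\tau_1$ on the event $\{\tau_1\leq T/2\}$: conditionally on $(\tau_1,G_{\tau_1})$, the process restarts from $(0,G_{\tau_1})$ and runs for time $T-\tau_1\geq T/2$. By further restricting to the event that $G_{\tau_1}\in[a,b]$ for a fixed compact interval $[a,b]\subset(0,\infty)$, which can be taken to have uniformly positive probability by tightness for the reflected OU process together with the structure of $\tau_1$, I would establish that for initial data $(0,g_1)$ with $g_1\in[a,b]$ the distribution of $(V_s,G_s)$ for $s\in[T/2,T]$ admits a smooth density on $\mathcal{X}$ that is strictly positive on some fixed open set $U\subset\mathcal{X}$, uniformly in $g_1$. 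This proceeds in the spirit of Lemma~\ref{lem:density_continuity_1}, upgraded to the process with jumps by decomposing according to the (almost surely finite) number of post-$\tau_1$ jumps and iteratively pasting hypoelliptic pieces via strong Markov, and supplemented by a Stroock--Varadhan-type support argument exploiting the controllability of the drift $J$ by the diffusive component $G$.

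Assembling these pieces, integrating the uniform positive lower bound of the post-restart density against the conditional law of $(\tau_1,G_{\tau_1})$ on the good event yields
\[
\mathbb{P}^{v_0,g_0}((V_T,G_T)\in A)\geq \eta(R)\,\nu(A)
\]
for every Borel $A\subset\mathcal{X}$, with $\nu$ the normalized Lebesgue measure on $U$ and $\eta(R)>0$ independent of $(v_0,g_0)\in\mathcal{C}$. The main obstacle is the third step: while Lemma~\ref{lem:density_continuity_1} directly provides smoothness only for the auxiliary jump-free process $\widehat{V}$, controlling both smoothness and strict positivity of the density of $(V,G)$ across possible additional post-$\tau_1$ spikes requires a careful strong-Markov iteration over the renewal structure and a controllability argument tailored to the velocity field \eqref{eq:velocity-v} under the constraint $V\in[0,1)$.
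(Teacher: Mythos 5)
Your approach is genuinely different from the paper's. The paper picks a zero $(v^*,g^*)$ of $J$ lying in the Dirichlet region $g^*<g_F$ (so the process can sit there without ever spiking), traps $(V,G)$ near that point by time $T_1$ using a control on the reflected OU process $G$, and then at the \emph{deterministic} time $T_1$ invokes the density of the absorbed process $\widetilde V$ run for a further \emph{deterministic} time $T_2$. You instead exploit the jump mechanism directly: force a spike before $T/2$, restart from $V=0$, and argue about the density of the post-spike process. The conceptual idea --- that the reset erases $v_0$-dependence --- is sound, and your step~1 (use $g>g_F$ to make $J$ bounded below on $[0,1)$ and force a spike, uniformly over $g_0\in[0,M(R)]$) parallels the paper's Lemma~\ref{lem:ou_bound} and goes through.

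The gap, which you yourself flag, is in step~3. Because you apply strong Markov at the \emph{random} time $\tau_1$, the time remaining to $T$ is random in $[T/2,T]$, so you need the transition density from $(0,g_1)$ to be bounded below on a fixed open set $U$, uniformly over the pair $(s,g_1)\in[T/2,T]\times[a,b]$. This requires (i) extending the smoothness statement of Lemma~\ref{lem:density_continuity_1} to the boundary initial condition $v_0=0$ (harmless, since the H\"ormander bracket $[A_0,A_1]=(0,V_E-v,1)$ only degenerates at $v=V_E$); (ii) a positivity/controllability argument giving a common target point $(v_2^*,g_2^*)$ reachable from $(0,g_1)$ in time $s$ for every $(s,g_1)$ in the compact parameter set; and (iii) dealing with the fact that the post-$\tau_1$ process can spike again within $[T/2,T]$, so either a lower bound by the absorbed process (with positivity uniform in the random remaining time) or an explicit iteration over further jumps is needed. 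None of these is carried out, and together they constitute the substance of the proof. The paper's choice of a \emph{stable} zero of $J$ in the non-spiking region $g^*<g_F$ is precisely what lets it avoid every one of these issues: no random stopping time, no post-reset spikes to iterate over, and positivity of $\widetilde\rho_{T_2}^{v^*,g^*}$ needed at a single point only, then spread by continuity to the neighborhood $\mathcal{N}\times\mathcal{K}$ via \eqref{e:prob_lower_bound}. Your route can likely be pushed through, but as written it is an outline with the hard part deferred, whereas the paper's trapping argument is designed to make that hard part disappear.
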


Figure~\ref{fig:strategy} demonstrates the strategy of proving Proposition~\ref{lem:minorisation}. We decompose the Markov process $\{(V_t,G_t)\}_{0\leq t\leq T}$ into two parts: $t \in [0,T_1]$ and $t \in [T_1, T]$, where $T_1$ is a proper time independent of $R$, which will be specified later in the proof of Proposition~\ref{lem:minorisation}. 

\textit{Step 1}: Notice that the function $J(v,g)$ has zeros in the domain $\mathcal{C}$, and we choose a proper zero point $(v^*,g^*)$ of $J$ and its neighbourhood $\mathcal{N}$. Under some restrictions, $(V_t,G_t)$ can stay in $\mathcal{N}$ by time $T_1$ once entering $\mathcal{N}$. And the restrictions of $(V_t,G_t)$ can only be attached to the reflected OU process $\{G_t\}_{0\leq t\leq T_1}$ since $\{V_t\}$ is totally determined by $\{G_t\}$.

\textit{Step 2}: For the second part, we choose $T_2>0$ independent of $R$ and find a uniform lower bound of transition probability at time $T_2$ with initial data $(v,g)$ for every $(v,g)\in\mathcal{N}$ (see \eqref{e:prob_lower_bound}). Note that $T:=T_1+T_2$ is independent of $R$. Combining two parts yields the desired lower bound \eqref{e:lower_bound}.

\begin{figure}
    \centering
\begin{tikzpicture}[scale=0.9,baseline=2]
    \node at (2.5,4.5) {$\mathcal{C}$};
    \draw [thick] (0,6)--(0,0)--(3,0)--(3,6);
    \draw [thick] (0,5)--(3,5);
    \node at (0,-0.3) {$(0,0)$};
    \node at (3,-0.3) {$(1,0)$};
    \node at (-0.7,5) {$(0,M)$};
    \filldraw [red] (1,4) circle (1pt) ;
    \node at (1,4.3) {\small $(V_0,G_0)$};
    \draw [red][->] (1.1,4)--(1.3,4);
    \draw [blue](0,0) .. controls (1.5,0.7) .. (3,4);
    \filldraw [black] (1.5,1) circle (1pt) ;
    \node at (1.1,1.3) {\small $(v^*,g^*)$};
    \node at (1.5,3) {\footnotesize \textcolor{blue}{$\{J=0\}$}};
    \draw [blue][->] (2.2,2.5)--(2,2.8);
    \node at (1,2) {\footnotesize  \textcolor{blue}{$J>0$}};
    \draw [blue](0.5,1.8) rectangle (1.5,2.2);
    \node at (2.3,0.5) {\footnotesize  \textcolor{blue}{$J<0$}};
    \draw [blue](1.8,0.3) rectangle (2.8,0.7);
\end{tikzpicture}
\begin{tikzpicture}
    [scale=1,baseline=2]
    \draw [->] (0,3)--(1,3);
    \node at (0.5,3.2) {\tiny after};
    \node at (0.5,2.8) {\tiny time $T_1$};
\end{tikzpicture}
\begin{tikzpicture}[scale=0.9,baseline=2]
    \node at (2.5,4.5) {$\mathcal{C}$};
    \draw [thick] (0,6)--(0,0)--(3,0)--(3,6);
    \draw [thick] (0,5)--(3,5);
    \node at (0,-0.3) {$(0,0)$};
    \node at (3,-0.3) {$(1,0)$};
    \node at (-0.7,5) {$(0,M)$};
    \filldraw [red] (1,4) circle (1pt) ;
    \node at (1,4.3) {\small $(V_0,G_0)$};
    \filldraw [black] (1.5,1) circle (1pt) ;
    \node at (1.1,1.7) {\tiny $(v^*,g^*)$};
    \draw [->] (1.5,1) .. controls (1.3,1.1) .. (1.2,1.55);
    \draw [thick] (0.6,0.6) rectangle (2.4,1.4);
    \node at (1.5,0.47) {\tiny $2v_r$};
    \node at (0.4,1) {\tiny $2g_r$};
    \draw (0.6,0.6)--(0.6,0.4);
    \draw [->] (1.25,0.5)--(0.6,0.5);
    \draw (2.4,0.6)--(2.4,0.4);
    \draw [->] (1.75,0.5)--(2.4,0.5);
    \draw (0.25,1.4)--(0.6,1.4);
    \draw [->] (0.4,1.1)--(0.4,1.4);
    \draw (0.25,0.6)--(0.6,0.6);
    \draw [->] (0.4,0.9)--(0.4,0.6);
    \draw (1.1,0.6)--(1.9,1.4);
    \node at (0.8,1.2) {\small $\mathcal{N}$};
    \draw [blue][->] (1,1.2)--(1.6,1.2);
    \draw [blue][->] (0.7,1)--(1.3,1);
    \draw [blue][->] (0.7,0.8)--(1.2,0.8);
    \draw [blue][->] (2.3,1.2)--(1.8,1.2);
    \draw [blue][->] (2.3,1)--(1.6,1);
    \draw [blue][->] (2.3,0.8)--(1.4,0.8);
    \filldraw [red] (1.9,1.1) circle (1pt);
    \draw [red] (1,4) .. controls (1,2.5) and (2.5,2.5) .. (1.9,1.1);
    \node at (2.4,1.8) {\tiny $(V_{T_1},G_{T_1})$};
    \draw [black][->] (1.9,1.1) .. controls (2.2,1.4) .. (2.2,1.65);
\end{tikzpicture}
\begin{tikzpicture}
    [scale=1,baseline=2]
    \draw [->] (0,3)--(1,3);
    \node at (0.5,3.2) {\tiny after};
    \node at (0.5,2.8) {\tiny time $T_2$};
\end{tikzpicture}
\begin{tikzpicture}[scale=0.9,baseline=2]
    \node at (2.5,4.5) {$\mathcal{C}$};
    \draw [thick] (0,6)--(0,0)--(3,0)--(3,6);
    \draw [thick] (0,5)--(3,5);
    \node at (0,-0.3) {$(0,0)$};
    \node at (3,-0.3) {$(1,0)$};
    \node at (-0.7,5) {$(0,M)$};
    \draw [thick](0.6,0.6) rectangle (2.4,1.4);
    \node at (0.8,1.2) {\small $\mathcal{N}$};
    \filldraw [red] (1.9,1.1) circle (1pt);
    \node at (1.75,0.9) {\tiny $\Big(V_{T_1},G_{T_1}\Big)$};
    \draw [thick] (1.2,3) circle (1);
    \node at (1.2,3.6) {supp $\nu$};
    \filldraw [red] (1,2.5) circle (1pt);
    \node at (1,2.7) {\tiny $\Big(V_{T},G_{T}\Big)$};
    \draw [red] (1.9,1.1) .. controls (1,0) and (-1,-1) .. (1,2.5);
\end{tikzpicture}
    \caption{\footnotesize The strategy for the proof of Proposition~\ref{lem:minorisation}. In these coordinate systems, horizontal axes represent $v$-direction, vertical axes represent $g$-direction. In the first illustration, the blue curve represents the zero set of the velocity field $J$, and it splits the domain $\mathcal{C}$ into two parts. The point $(v^*,g^*)$ is a particular zero point of $J$. The red point $(V_0,G_0)$ represents the initial point of the process. In the second illustration, the rectangle domain $\mathcal{N}$ is a neighbourhood of $(v^*,g^*)$. The blue arrows represent the direction of the velocity field $J$. The red curve represents the trajectory of $(V_t,G_t)$ ending at time $T_1$, and the end point $(V_{T_1},G_{T_1})$ belongs to $\mathcal{N}$. In the third illustration, $\nu$ is a probability measure and the circle represents its support. The red curve represents the trajectory of $(V_t,G_t)$ ending at time $T$, and the end point $(V_T,G_T)$ belongs to the support of $\nu$.}
    \label{fig:strategy}
\end{figure}
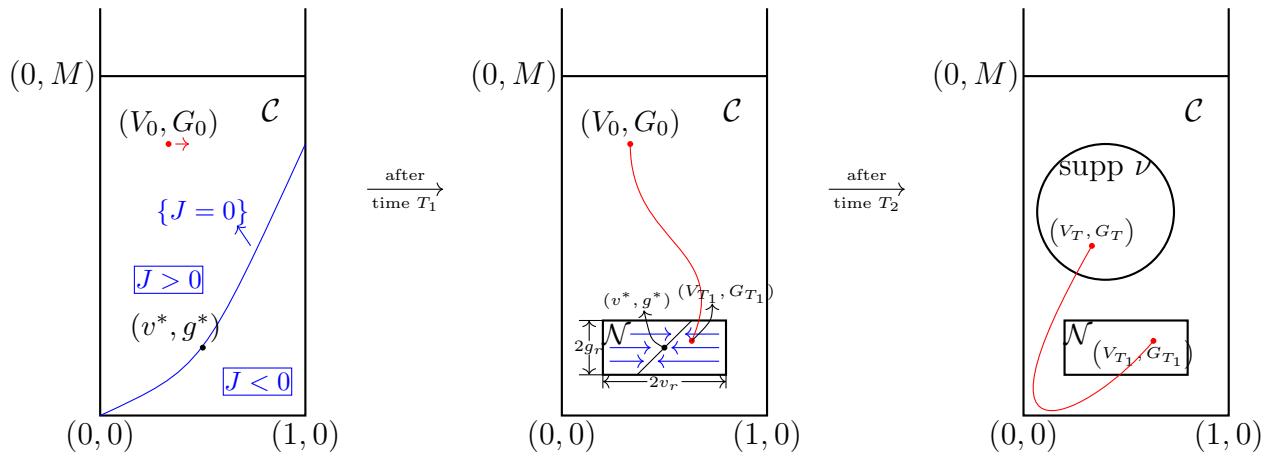

We choose the following zero point of $J$
\begin{equation} \label{e:zero_point}
	(v^*,g^*) = \bigg(\frac{1}{2},\frac{g_L}{2V_E - 1}\bigg),
\end{equation}
and we are going to specify the proper neighbourhood $\mathcal{N}$ later in \eqref{e:N}.

Consider the Markov process $(\widetilde{V}_t)_{t \geq 0}$, which is the same as $V$ but absorbed at $1$. More precisely, it is given by
\begin{equation*}
	\widetilde{V}_{t} := \begin{cases}
		V_{t},\qquad &t<\tau_1;\\
		1,\qquad &t\geq\tau_1,
	\end{cases}
\end{equation*}
where $\tau_1$ is the first jump time defined in \eqref{def:jump-time}. Let $\widetilde{\rho}_t$ be the density of $\widetilde{V}_t$. We have the following lemma.

\begin{lemma} \label{lem:density_continuity_2}
	The map
	\begin{equation*}
		(t,v_0,g_0,v,g) \mapsto \widetilde{\rho}_t^{v_0,g_0}(v,g)
	\end{equation*}
    is smooth on $\mathbb{R^+}\times(0,1)\times\mathbb{R^+}\times(0,1)\times\mathbb{R^+}$. 	Furthermore, there exists a time $T_2>0$ and a point $(v,g)\in(0,1)\times\mathbb{R}^+$ such that \begin{equation} \label{e:positive_density}
	    \widetilde{\rho}_{T_2}^{v^*,\,g^*}(v,g)>0.
	\end{equation}
\end{lemma}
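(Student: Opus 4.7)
The plan is to establish the two claims of the lemma separately: first the smoothness of $\widetilde{\rho}_t^{v_0,g_0}(v,g)$, then positivity at some point for a suitable small time.

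For smoothness, I would follow the blueprint of Lemma~\ref{lem:density_continuity_1}. A useful preliminary observation is that the absorbed process $\widetilde{V}_t$ is actually continuous in time: indeed $\widetilde{V}_{\tau_1^-} = 1 = \widetilde{V}_{\tau_1}$, so there is no jump at the absorption time. For test functions $\phi \in \mathcal{C}^{\infty}_c\bigl((0,T)\times(0,1)\times(0,+\infty)\bigr)$ supported in the open interior, the ordinary It\^o formula applied to $\phi(t,\widetilde{V}_t,G_t)$ yields
\begin{equation*}
    \phi(T,\widetilde{V}_T,G_T) - \phi(0,v_0,g_0) = \int_0^T (\partial_t + \mathcal{L})\phi(t,\widetilde{V}_t,G_t)\, dt + \sqrt{2}\int_0^T \partial_g\phi(t,\widetilde{V}_t,G_t)\, dB_t,
\end{equation*}
because all boundary terms vanish: the local time contribution dies since $\phi \equiv 0$ near $g=0$, and the spurious drift $J\partial_v\phi(t,\widetilde{V}_t,G_t)\mathbf{1}_{t\geq\tau_1}$ that would arise from $\widetilde{V}$ ceasing to obey the ODE after $\tau_1$ is identically zero since $\widetilde{V}_t = 1$ on $\{t\geq\tau_1\}$ and $\phi$ vanishes near $v=1$. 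Taking expectations, with both endpoint terms vanishing, gives
\begin{equation*}
    (\partial_t - \mathcal{L}^*)\widetilde{\rho}_t^{v_0,g_0} = 0 \quad \text{in } \mathcal{D}'\bigl((0,+\infty)\times(0,1)\times(0,+\infty)\bigr).
\end{equation*}
Since $(0,1)\subset(0,V_E)$, Lemma~\ref{lem:hypoelliptic} shows $\partial_t - \mathcal{L}^*$ is hypoelliptic on this region, so $\widetilde{\rho}_t^{v_0,g_0}(v,g)$ is smooth in $(t,v,g)$. The joint smoothness in the initial data $(v_0,g_0)$ follows from the Kolmogorov backward equation combined with the hypoellipticity of $\partial_t + \mathcal{L}$, exactly as in the last paragraph of the proof of Lemma~\ref{lem:density_continuity_1}.

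For the positivity claim, I would use a soft mass-conservation argument. For any $T_2>0$,
\begin{equation*}
    \int_{(0,1)\times(0,+\infty)} \widetilde{\rho}_{T_2}^{v^*,g^*}(v,g)\, dv\, dg = \mathbb{P}^{v^*,g^*}\bigl(\widetilde{V}_{T_2}\in(0,1),\, G_{T_2}>0\bigr) = \mathbb{P}^{v^*,g^*}(\tau_1 > T_2),
\end{equation*}
where the last equality uses Proposition~\ref{prop:boundary-regularity} to discard the null events $\{V_{T_2}=0\}$ and $\{G_{T_2}=0\}$. Since $v^* = 1/2 < 1$ and the trajectory of $V$ is continuous on $[0,\tau_1)$, the first jump time satisfies $\tau_1 > 0$ almost surely, so by continuity of measure $\mathbb{P}^{v^*,g^*}(\tau_1 > T_2) \to 1$ as $T_2 \to 0^+$. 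Picking $T_2$ small enough, the integral above is strictly positive, and since $\widetilde{\rho}_{T_2}^{v^*,g^*}$ is a nonnegative smooth function, it must be positive at some point $(v,g)\in(0,1)\times(0,+\infty)$.

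The main obstacle is setting up a clean It\^o formula for the absorbed process, since $\widetilde{V}$ does not satisfy a single SDE on all of $[0,T]$. The key trick is to restrict test functions to be compactly supported in the open region $(0,1)\times(0,+\infty)$: this simultaneously eliminates the spurious drift contribution after absorption (thanks to $\widetilde{V}_t = 1$ falling outside the support of $\phi$) and the local time term at $g=0$. Once this technical point is settled, hypoellipticity and the soft continuity-of-measure positivity argument complete the proof without further difficulty.
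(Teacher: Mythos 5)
Your proof is correct. The smoothness part is essentially the paper's argument spelled out: the paper defers to \cite[Theorem~4.3]{foldes2021method} and the analogous modification from Lemma~\ref{lem:density_continuity_1}, while you make explicit the useful observations that $\widetilde{V}_t$ is in fact continuous through $\tau_1$ (no jump at absorption), that restricting $\phi$ to be compactly supported in $(0,T)\times(0,1)\times(0,\infty)$ kills both the local-time term and the post-absorption correction, and that hypoellipticity then applies since $(0,1)\subset(0,V_E)$.

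The positivity argument, however, is genuinely different. The paper's route is constructive: it fixes the explicit time $T_2 = \frac{1}{(2g^*+3)V_E}$, bounds the trajectory $\widetilde{V}_t$ deterministically on the event $\{\max_{t\le T_2}G_t < g^*+1\}$ so that $\widetilde{V}_{T_2}<1$, and then invokes the Stroock--Varadhan support theorem to show this event (with the additional constraint $\min_t G_t>0$) has positive probability. You instead use a soft argument: the identity $\int_{(0,1)\times(0,\infty)}\widetilde{\rho}^{v^*,g^*}_{T_2} = \mathbb{P}^{v^*,g^*}(\tau_1>T_2)$ (which both proofs use, with the boundary null sets handled via Proposition~\ref{prop:boundary-regularity}), plus the elementary fact that $\tau_1>0$ almost surely by path continuity from $V_0=1/2<1$, and then continuity of measure as $T_2\downarrow 0$. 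Your version avoids the support theorem entirely for this step and is arguably cleaner; the paper's version has the minor advantage of exhibiting a concrete formula for $T_2$, which it then carries through to $T=T_1+T_2$ in the proof of Proposition~\ref{lem:minorisation} and Theorem~\ref{thm:harris} — but since the downstream argument only needs some fixed $T_2>0$ independent of $R$, your choice serves just as well. Both approaches are valid.
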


\begin{proof}
	The proof of the smoothness is essentially the same as \cite[Theorem~4.3]{foldes2021method} except that we have the reflective boundary in $g$-direction, which can be dealt in the same way as the proof in Lemma~\ref{lem:density_continuity_1}. We omit the details here.

    Now we choose $T_2= \frac{1}{(2g^*+3)V_E}$, the above result implies that $\widetilde{\rho}_{T_2}^{v_0,g_0}(v,g)$ is continuous with respect to $(v,g)$. Then it suffices to prove
    \begin{equation*}
        \mathbb{P}^{v^*,\,g^*}(T_2 < \tau_1) = \int_{\mathcal{X}}\widetilde{\rho}_{T_2}^{v^*,\,g^*}(v,g)dvdg >0,
    \end{equation*}
    {where $\mathbb{P}^{v^*,g^*}$ is the probability measure conditioned on $\{V_0=v^*,G_0=g^*\}$.} 
    In fact, if $G_t< g^*+1$ for every $0\leq t\leq T_2$, then we have $J(V_t,G_t)< (g^*+1)V_E$ for every $0\leq t\leq T_2$, and hence
	\begin{equation*}
		\widetilde{V}_{T_2} = v^* + \int_0^{T_2} J(V_t,G_t) dt < v^* + T_2(g^*+1)V_E < 1.
	\end{equation*}
	Therefore, we obtain
	\begin{equation*}
		\mathbb{P}^{v^*,g^*}(T_2<\tau_1) \geq \mathbb{P}^{g^*}\Big(\max_{0\leq t\leq T_2}G_t<g^*+1\Big)>\mathbb{P}^{g^*}\Big(\max\limits_{0\leq t\leq T_2}G_t<g^*+1,\min\limits_{0\leq t\leq T_2} G_t >0\Big),
	\end{equation*}
    {where $\mathbb{P}^{g^*}$ is the probability measure conditioned on $\{G_0=g^*\}$.} For the event $\{\max_{0\leq t\leq T_2}G_t<g^*+1,\min_{0\leq t\leq T_2} G_t >0\}$, we can replace the reflected OU process $G_t$ by the OU process without reflective boundary since $G_t$ does not touch the reflective boundary. By the support theorem for diffusion process given in \cite{CPAM1972StroockVaradhan}, we have
    \begin{equation}
    \label{e:apply_support}
        \mathbb{P}^{g^*}\Big(\max_{0\leq t\leq T_2}G_t<g^*+1,\min_{0\leq t\leq T_2} G_t >0\Big)>0.
    \end{equation}
    This completes the proof.
\end{proof}

For the time $T_2$ chosen in the above lemma, we can find a point $(v,g)\in(0,1)\times\mathbb
{R}^+$ such that $\widetilde{\rho}_{T_2}^{v^*,g^*}(v,g)>0$. By the continuity of $\widetilde{\rho}_{T_2}^{v_0,g_0}(v_1,g_1)$ with respect to $(v_0,g_0,v_1,g_1)$, we can find sufficiently small constants $\delta,\varepsilon>0$ and a non-empty open set $\mathcal{K}$ with $(v,g)\in\mathcal{K}\subset \mathcal{X}$ such that
\begin{equation} \label{e:prob_lower_bound}
	\widetilde{\rho}^{v_1,g_1}_{T_2}(v_2,g_2) \geq \varepsilon,\quad \forall\;(v_1,g_1)\in B_{\delta}(v^*,g^*),\quad\forall\;(v_2,g_2)\in\mathcal{K}.
\end{equation}
Note that the choice of the set $\mathcal{K}$ is independent of $R$ since $\mathcal{K}$ depends on the function $\widetilde{\rho}_{T_2}$ only.

Now we choose a small constant $v_r<\delta$ and a sufficiently small constant $g_r$ such that
\begin{equation} \label{e:J*}
	g_r^2 + v_r^2 < \delta^2,\qquad J^*:=\min_{g\in[ g^*-g_r,g^*+g_r]} |J(v^* - v_r,g)| \wedge |J(v^* + v_r,g)| >0,
\end{equation}
where the last condition ensures that $(V_t,G_t)$ moves toward the domain between two lines $v=v^*-v_r$ and $v=v^*+v_r$ with velocity at least $J^*$ in $v$-direction if $G_t\in(g^*-g_r,g^*+g_r)$. Then the neighbourhood $\mathcal{N}$ is determined by (see Figure~\ref{fig:strategy})
\begin{equation}
\label{e:N}
	\mathcal{N} := \left[ v^* - v_r,v^*+v_r \right] \times [ g^*-g_r,g^*+g_r].
\end{equation}

We are going to bound the probability of $\{(V_{T_1},G_{T_1})\in\mathcal{N}\}$, which corresponds to step 1 in Figure~\ref{fig:strategy}. First we provide a technical lemma about $G_t$, which will be used in Lemma~\ref{lem:bound_T_1}.

\begin{lemma}
\label{lem:ou_bound}
    Consider the reflected OU process $G_t$ given by \eqref{sde-g}. For every $M,m>0$, $b>2c>0$, we have
    \begin{equation*}
        \inf_{g_0\in[0,M]}\mathbb{P}^{g_0}(G_t\in[b-2c,b+2c],\;\forall\;t\in[1,1+m]) >0,
    \end{equation*}
    where $\mathbb{P}^{g_0}$ denotes the probability measure conditioned on $\{G_0=g_0\}$.
\end{lemma}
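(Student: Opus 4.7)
\textbf{The plan} is to split the time interval $[1,1+m]$ using the Markov property at time $1$: by that time the process must have reached a slightly smaller target window around $b$, and then it must remain in $[b-2c,b+2c]$ for the entire interval of length $m$. Writing $\Phi(y) := \mathbb{P}^{y}\bigl(G_s \in [b-2c,b+2c],\ \forall s\in[0,m]\bigr)$, the Markov property yields
\[
\mathbb{P}^{g_0}\bigl(G_t\in[b-2c,b+2c],\ \forall t\in[1,1+m]\bigr) \;\geq\; \mathbb{E}^{g_0}\bigl[\mathbf{1}_{\{G_1 \in [b-c,b+c]\}}\,\Phi(G_1)\bigr],
\]
so it suffices to prove (a) $\inf_{y\in[b-c,b+c]}\Phi(y)>0$, and (b) $\inf_{g_0\in[0,M]}\mathbb{P}^{g_0}(G_1\in[b-c,b+c])>0$.

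\textbf{For (a),} I would apply the Stroock--Varadhan support theorem in the spirit of the argument around \eqref{e:apply_support}. Since $b-2c>0$, the constant path $\phi\equiv y$ lies in the open tube $(b-2c,b+2c)\subset(0,\infty)$, so on the event $\{\sup_{s\leq m}|G_s-y|<c\}$ the local time $L$ does not grow and $G$ coincides with the ordinary OU process started at $y$. The support theorem for the latter in the uniform-norm topology then gives $\Phi(y)>0$ for each $y$. Uniformity over the compact interval $[b-c,b+c]$ follows from continuity of $y\mapsto\Phi(y)$, which comes from continuous dependence of the (unreflected) OU flow on its initial condition and dominated convergence.

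\textbf{For (b),} I would similarly construct, for each $g_0\in[0,M]$, a smooth control path $\phi_{g_0}:[0,1]\to\mathbb{R}$ with $\phi_{g_0}(0)=g_0$, $\phi_{g_0}(1)\in(b-c/2,b+c/2)$, and $\phi_{g_0}(t)>\delta_0$ for $t\in[\varepsilon_0,1]$, with $\delta_0,\varepsilon_0>0$ depending only on $M$. On a narrow uniform tube around such a path the reflected process again agrees with the ordinary OU (after the initial segment $[0,\varepsilon_0]$), and the support theorem yields a positive lower bound on $\mathbb{P}^{g_0}(G_1\in[b-c,b+c])$. Uniformity in $g_0$ follows either by choosing $\phi_{g_0}$ to depend continuously on $g_0$, or more conceptually by invoking the joint continuity and strict positivity of the reflected OU transition density on the compact set $[0,M]\times[b-c,b+c]$.

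\textbf{The main obstacle} is the boundary case $g_0=0$: the process starts exactly on the reflective boundary and the natural control path $\phi_{g_0}$ touches $\{g=0\}$ at $t=0$, so one cannot directly invoke the support theorem for the unreflected OU on all of $[0,1]$. I would handle this by decomposing $[0,1]=[0,\varepsilon_0]\cup[\varepsilon_0,1]$: on the short interval $[0,\varepsilon_0]$ a crude tightness estimate suffices to show that $G_{\varepsilon_0}$ lands in some fixed compact subset of $(0,\infty)$ with positive probability uniformly in $g_0\in[0,M]$ (e.g.\ via the reflected OU density at time $\varepsilon_0$), and on $[\varepsilon_0,1]$ the trajectories of interest are bounded away from $0$, so the support theorem applies and the probability is lower bounded uniformly over the (compact) set of possible values of $G_{\varepsilon_0}$.
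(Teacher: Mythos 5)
Your overall structure — the Markov split at time $1$ into a "reach" step and a "stay" step — is exactly the paper's decomposition, and your preferred route for step (b) (continuity of the reflected transition density jointly in $(g_0,g)$ plus strict positivity) is precisely what the paper uses, with positivity supplied by a support theorem for \emph{reflected} diffusions (their cited \cite{support_reflection}) that already covers $g_0=0$; the extra $[0,\varepsilon_0]\cup[\varepsilon_0,1]$ patch you introduce for the boundary case is therefore not needed. The genuine divergence is in step (a). You establish $\Phi(y)>0$ pointwise by the support theorem and then obtain uniformity over $[b-c,b+c]$ from a claimed continuity of $y\mapsto\Phi(y)$. The paper instead couples: it drives two \emph{unreflected} OU processes $G^{(1)},G^{(2)}$, started at $b-c$ and $b+c$, by the same Brownian motion, observes that (before exit from $[b-2c,b+2c]$, where $b-2c>0$ ensures the reflection never activates) the process from any $g\in[b-c,b+c]$ is sandwiched between them, reduces everything to the single centred process $G^{(3)}$, and applies the support theorem once. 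This buys uniformity for free, with no continuity lemma at all. Your route is defensible but a bit under-justified as written: "continuous dependence of the flow plus dominated convergence" does not immediately give continuity of $\Phi$, because the indicator $\mathbf 1_{\{G_s\in[b-2c,b+2c]\ \forall s\le m\}}$ is not a continuous functional of the path. One must additionally check that the boundary-grazing event (the path touches but never crosses $b\pm 2c$) is null; more economically, one can argue only lower semicontinuity of $\Phi$ via the open event $\{G_s\in(b-2c,b+2c)\ \forall s\le m\}$ and Fatou, which together with pointwise positivity and compactness already yields $\inf_{[b-c,b+c]}\Phi>0$. Either fix closes the gap, but the paper's sandwich coupling sidesteps the issue entirely.
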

\begin{proof}  
	By Markov property of $G_t$, we have
	\begin{equation*}
     \begin{split}
	    &\phantom{111}\mathbb{P}^{g_0}(G_t\in[ b-2c,b+2c],\;\forall\;1\leq t \leq 1+m)\\
     &\geq \mathbb{P}^{g_0}(G_{1}\in [b-c,b+c]) \cdot \inf_{g\in[b-c,b+c]} \mathbb{P}^g(G_t\in[ b-2c,b+2c],\;\forall\;0\leq t \leq m)\;.
     \end{split}
	\end{equation*}
	Then it suffices to prove the bounds
	\begin{equation} \label{e:Gt_bound}
        \inf_{g\in[b-c,b+c]} \mathbb{P}^g(G_t\in[ b-2c,b+2c],\;\forall\;0\leq t \leq m)>0;
    \end{equation}
    \begin{equation} \label{e:G1_bound}
        \inf_{g_0\in[0,M]} \mathbb{P}^{g_0}(G_{1}\in [b-c,b+c])>0.
	\end{equation}
	For \eqref{e:Gt_bound}, before exiting the interval $[b-2c, b+2c]$, the process $G_t$ is controlled from below and above by two OU processes without boundary reflection starting from $b-c$ and $b+c$ respectively. More precisely, for  
    \begin{equation*}
        G_t^{(1)} = e^{-t}(b-c) + (1 - e^{-t}) + \sqrt{2}\int_0^t e^{-(t-s)} dB_s,\quad G_t^{(2)} = e^{-t}(b+c) + (1 - e^{-t}) + \sqrt{2}\int_0^t e^{-(t-s)} dB_s\;,
    \end{equation*}
    we have
    \begin{equation*}
        \begin{split}
        \inf_{g\in[b-c,b+c]} \mathbb{P}^g(G_t\in[ b-2c,b+2c],\;\forall\;0\leq t \leq m) &\geq \mathbb{P}(b-2c \leq G_t^{(1)} \leq G_t^{(2)}\leq b+2c,\;\forall\;0\leq t \leq m)\\
        &= \mathbb{P}(e^{-t}c-2c \leq G_t^{(3)} \leq 2c - e^{-t}c ,\;\forall\;0\leq t \leq m)\;.
        \end{split}
    \end{equation*}
    where
    \begin{equation*}
        G_t^{(3)}:= (1- e^{-t})(1-b) + \sqrt{2}\int_0^t e^{-(t-s)} dB_s\;.
    \end{equation*}
    Therefore, \eqref{e:Gt_bound} holds by the support theorem for $G_t^{(3)}$. For \eqref{e:G1_bound}, we have
    \begin{equation*}
        \Xi(g_0) := \mathbb{P}^{g_0}(G_{1}\in [b-c,b+c]) = \int_{b-c}^{b+c} \widetilde{\rho}(1,g_0,g) dg.
    \end{equation*}
    Since $(g_0,g)\mapsto \widetilde{\rho}(1,g_0,g) $ belongs to $\mathcal{C}^{\infty}(\mathbb{R}^+ \times \mathbb{R}^+)\cap\mathcal{C}([0,+\infty)\times\mathbb{R}^+)$, we have the continuity of $\Xi$. By the support theorem for reflected diffusion processes established in \cite[Theorem~8]{support_reflection}, we get $\Xi(g_0)>0$ for every fixed $g_0\in[0,M]$. \eqref{e:G1_bound} then follows from the continuity of $\Xi$.
\end{proof}

Now we are ready to prove the main lemma in step 1. 

\begin{lemma}
\label{lem:bound_T_1}
	There exist constants $T_1>0$ and $\gamma(R)>0$ such that we have
	\begin{equation*}
		\inf_{(v_0,g_0)\in\mathcal{C}} \mathbb{P}^{v_0,g_0}((V_{T_1},G_{T_1})\in\mathcal{N}) >\gamma(R)
	\end{equation*}
\end{lemma}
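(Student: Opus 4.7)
The plan is to construct the event $\{(V_{T_1}, G_{T_1}) \in \mathcal{N}\}$ on top of a purely $G$-event whose probability is uniformly controlled by Lemma~\ref{lem:ou_bound}, and then let the deterministic ODE $\dot V_t = J(V_t, G_t)$ carry $V$ into the strip $[v^* - v_r,\, v^* + v_r]$. I would split time into $[0, 1]$, on which no constraint is imposed (and hence $V_1$ may be an arbitrary point in $[0, 1)$), and $[1, T_1]$, on which I insist that $G_t$ lies in $[g^* - g_r,\, g^* + g_r]$. Applying Lemma~\ref{lem:ou_bound} with $b := g^*$, $c := g_r / 2$ (the assumption $b > 2c$ is $g^* > g_r$, which holds for $g_r$ chosen sufficiently small in \eqref{e:J*}) and $M := M(R)$ yields
\[
    \gamma(R) := \inf_{g_0 \in [0,\,M(R)]} \mathbb{P}^{g_0}\bigl(G_t \in [g^* - g_r,\, g^* + g_r],\; \forall\, t \in [1, T_1]\bigr) > 0.
\]
Call this $G$-event $A$; since $G$ evolves autonomously, $\mathbb{P}^{v_0,g_0}(A)$ depends only on $g_0$.

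Next I would fix the time horizon by taking $T_1 := 1 + m$ with $m := \max\bigl\{(v^* - v_r)/J^*,\; (1 - v^* - v_r)/J^*\bigr\} + 1$. Using the definition of $J^*$ in \eqref{e:J*} together with the strict monotonicity $\partial_v J = -g_L - g < 0$, on the event $A$ I obtain $J(V_t, G_t) \geq J^*$ whenever $V_t \leq v^* - v_r$ and $J(V_t, G_t) \leq -J^*$ whenever $V_t \geq v^* + v_r$, for every $t \in [1, T_1]$. Starting from any $V_1 \in [0, 1)$, the $V$-trajectory is therefore monotonically driven toward the strip at speed at least $J^*$: if $V_1 < v^* - v_r$ it enters within time $(v^* - v_r)/J^* < m$; if $V_1 > v^* + v_r$ it enters within time $(1 - v^* - v_r)/J^* < m$; and once inside the strip it cannot leave, since the velocity at both boundaries of the strip points strictly inward. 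Combining this with the $G$-control under $A$ gives $(V_{T_1}, G_{T_1}) \in \mathcal{N}$ on $A$, so that
\[
    \inf_{(v_0, g_0) \in \mathcal{C}} \mathbb{P}^{v_0, g_0}\bigl((V_{T_1}, G_{T_1}) \in \mathcal{N}\bigr) \geq \inf_{g_0 \in [0,\,M(R)]} \mathbb{P}^{g_0}(A) \geq \gamma(R).
\]

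The step I expect to need the most care is ruling out spikes of $V$ during $[1, T_1]$ on the event $A$, since otherwise the renewal condition \eqref{sde-bc-v} could invalidate the deterministic ODE analysis above. This is however forced by the geometry: on $A$ the drift is strictly negative above $v^* + v_r$, so $V_t$ decreases and cannot reach $1$; below $v^* - v_r$ the drift is strictly positive, but it can push $V_t$ only as far as the strip $[v^* - v_r,\, v^* + v_r] \subset [0, 1)$, after which the sign-changing drift confines $V_t$ away from both $0$ and $1$. Hence no spike occurs on $[1, T_1]$ under $A$, and the uniform lower bound on $\mathbb{P}^{g_0}(A)$ transfers directly to the desired uniform lower bound on the probability of $\{(V_{T_1}, G_{T_1}) \in \mathcal{N}\}$.
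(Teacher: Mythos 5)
Your proof is correct and follows essentially the same strategy as the paper: split $[0,T_1]$ into a warm-up phase $[0,1]$ (no constraint on $V$) and a confinement phase $[1,T_1]$, restrict $G$ to $[g^*-g_r,\,g^*+g_r]$ on the second phase via Lemma~\ref{lem:ou_bound}, and then exploit the deterministic ODE and the sign of $J$ to drive $V$ into the strip and trap it there. The paper takes $T_3 = 1/(2J^*)$ (which dominates the travel time since $v^*=1/2$ and $v_r>0$), while your $m$ is a slightly larger but equivalent choice; this is an immaterial difference. The one place you add genuine content is the explicit argument that no spike of $V$ can occur on $[1,T_1]$ under the $G$-confinement event — the paper treats this as implicit in the phrase "will be trapped in $\mathcal{N}$ by time $T_3$", but your remark that the drift is strictly negative above $v^*+v_r$ (so $V$ cannot reach $1$) is exactly the justification and is worth spelling out.
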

\begin{proof}
	We will choose a proper constant $T_3>0$ and let $T_1:= 1+T_3$ so that we can control $(V_{T_1},G_{T_1})$ in $\mathcal{N}$ if we restrict the reflected OU process in $[g^*-g_r,g^*+g_r]$ for $t\in[1,T_1]$, then it suffices to bound the probability depending on $\{G_t\}_{0\leq t\leq T_1}$ only. 
 
    By the monotonicity of $J$ in $v$-direction, we have
	\begin{equation*}
		\min_{(v,g)\in[0,1)\times[ g^*-g_r,g^*+g_r]\setminus \mathcal{N}} |J(v,g)| = J^*>0.
	\end{equation*}
	Choose $T_3:=\frac{1}{2J^*}$. If $G_t$ is restricted in $[ g^*-g_r,g^*+g_r]$ for $t\in[0,T_3]$, then $(V_t,G_t)$ moves toward $\mathcal{N}$ with velocity at least $J^*$ in $v$-direction and will be trapped in the domain $\mathcal{N}$ by time $T_3$. Therefore, we have the relation
	\begin{equation*}
		   \{ G_t\in[ g^*-g_r,g^*+g_r],\;\forall\;0\leq t \leq T_3 \} \subset \{ (V_{T_3},G_{T_3})\in \mathcal{N}\}.
	\end{equation*}
	Recall that $T_1 = 1+T_3$, we have
	\begin{equation*}
		\inf_{(v_0,g_0)\in\mathcal{C}} \mathbb{P}^{v_0,g_0}((V_{T_1},G_{T_1})\in\mathcal{N}) \geq \inf_{g_0\in[0,M]} \mathbb{P}^{g_0}(G_t\in[ g^*-g_r,g^*+g_r],\;\forall\;1\leq t \leq T_1).
	\end{equation*}
    By Lemma~\ref{lem:ou_bound} with $c=\frac{g_r}{2},b=g^*,m=T_3$, the right hand side of the above inequality is positive and depends only on $R$ since $M(R)=\sqrt{R}+1$ depends on $R$ only.
\end{proof}

Now we have all ingredients to prove Proposition~\ref{lem:minorisation}.

\begin{proof}[Proof of Proposition~\ref{lem:minorisation}]
	
	Choose $T=T_1+T_2 = 1+\frac{1}{(2g^*+3)V_E} + \frac{1}{2J^*}$ which is independent of $R$. Recall the choice of $\mathcal{K}$ satisfying \eqref{e:prob_lower_bound}, the choice of $\mathcal{N}$ given in \eqref{e:N} and the lower bound of the transition probability given in \eqref{e:prob_lower_bound}. For every $A \subset \mathcal{X}$, we have
	\begin{equation*}
			\mathbb{P}^{v_0,g_0} ((V_T,G_T)\in A) \geq  \mathbb{P}^{v_0,g_0}((V_{T_1},G_{T_1})\in \mathcal{N}) \inf_{(v,g)\in\mathcal{N}} \int_{A\cap\mathcal{K}} \widetilde{\rho}^{v,g}_{T_2}(v_2,g_2) dv_2dg_2 \geq  \gamma \varepsilon | A \cap \mathcal{K}|\;,
	\end{equation*}
    where the last inequality follows from \eqref{e:prob_lower_bound} and Lemma~\ref{lem:bound_T_1}.
	
	Finally, let $\nu$ be the probability measure normalised by the Lebesgue measure on $\mathcal{K}$ and choose $\eta= \gamma\varepsilon |\mathcal{K}|$. We have
	\begin{equation*}
		\gamma\varepsilon| A \cap \mathcal{K}| = \eta \nu(A)
	\end{equation*}
    for every $A \subset \mathcal{X}$. Recall that $\mathcal{K}$ and $\varepsilon$ are independent of $R$, and $\gamma$ is independent of the choice of $(v_0,g_0)\in \mathcal{C}$ but depends on $R$. We can conclude that the time $T$ and the measure $\nu$ are independent of $R$, and the factor $\eta(R)$ is independent of $(v_0,g_0)\in\mathcal{C}$. Therefore, the bound \eqref{e:lower_bound} holds.
\end{proof}

\subsection{Exponential ergodicity -- proof of Theorem~\ref{thm:harris}}

Combining the Lyapunov function structure and the minorization condition of $(V_t,G_t)$, we can conclude the proof of Theorem~\ref{thm:harris}.

\begin{proof}[Proof of Theorem~\ref{thm:harris}]
    {Recall the constants $g^*$ defined by \eqref{e:zero_point} and $J^*$ defined by \eqref{e:J*}.} Choose $T= 1+\frac{1}{(2g^*+3)V_E} +\frac{1}{2J^*}$. Lemma~\ref{lem:Lyapunov} showed that there exists a Lyapunov function for the Markov operator $\mathcal{P}_T$. Proposition~\ref{lem:minorisation} showed that the minorization condition also holds for $\mathcal{P}_T$. Thus assumptions~1 and 2 are satisfied with $\mathcal{P} = \mathcal{P}_T$. Therefore, by Theorem~\ref{thm:general_harris}, \eqref{e:harris} holds along the subsequence $t_n = nT$ with $C=1$. More precisely, there exist $\beta>0$ and $\theta \in (0,1)$ such that
    \begin{equation*}
        \|(\pP_T^*)^n \mu_0 - \pi\|_\beta \leq \theta^n \|\mu_0 - \pi\|_\beta
    \end{equation*}
    for every probability distribution $\mu_0$ on $\mathcal{X}$ and every $n \in \mathbb{N}$. 
    
    To extend it to all times, for every $t>0$, we decompose it by $t = nT+ t'$ where $t'\in[0,T)$ and $n\in\mathbb{N}^+$. By Theorem~\ref{thm:harris}, we get
	\begin{equation*}
		\| \mathcal{P}_{t}^* \mu_0  - \pi \|_{\beta} = \| (\mathcal{P}_{T}^*)^n (\mathcal{P}_{t'}^* \mu_0  - \pi) \|_{\beta} \leq \theta^{n} \| \mathcal{P}_{t'}^* \mu_0  - \pi \|_{\beta} .
	\end{equation*}
	Recall the metric defined in \eqref{e:norm}, we obtain
	\begin{equation*}
	    \| \mathcal{P}_{t'}^* \mu_0  - \pi \|_{\beta} \leq \int_{\mathcal{X}} (1+\beta W(v,g)) (\mathcal{P}_{t'}^* |\mu_0-\pi|)(dv,dg) = \int_{\mathcal{X}} (1+\beta\mathcal{P}_{t'}W(v,g))  |\mu_0-\pi|(dv,dg),
	\end{equation*}
	where the first inequality follows from $\mathcal{P}_{t'}^* \pi = \pi$. Note that we can control $\mathcal{P}_{t'}W(v,g)$ as in \eqref{e:lyapunov} by
	\begin{equation*}
	    \mathcal{P}_{t'}W(v,g) \leq (g-1)^2 +1.
	\end{equation*}
	Then we have
	\begin{equation*}
	\begin{split}
	    \int_{\mathcal{X}}& (1+\beta\mathcal{P}_{t'}W(v,g))  |\mu_0-\pi|(dv,dg) \leq \int_{\mathcal{X}} (1+\beta(g-1)^2 +\beta)  |\mu_0-\pi|(dv,dg)\\
	    &\leq \int_{\mathcal{X}} (1+\beta)(1+\beta(g-1)^2) |\mu_0-\pi|(dv,dg) = (1+\beta) \|\mu_0-\pi\|_{\beta}.
	\end{split}
	\end{equation*}
	Combining the above inequalities, we obtain the desired result by choosing $\lambda=-\frac{\log\theta}{T}$ and $C = \theta^{-1}(1+\beta)$.
\end{proof}

\section{Conclusion}\label{sc:5-conclusion}

In this work, we address the exponential convergence to the steady state for the voltage-conductance equation \eqref{eq:FP-PDE}, based on a probabilistic reformulation. In particular, we construct a stochastic process \eqref{sde-v}-\eqref{ic-sde} which is closely related to that in \cite{cai2006kinetic}, the scientific heuristics to derive \eqref{eq:FP-PDE}.

As a by-product, we establish rigorously a link from the constructed stochastic process \eqref{sde-v}-\eqref{ic-sde} to a weak solution to the PDE \eqref{eq:FP-PDE}, which partially justifies the derivation in \cite{cai2006kinetic}. However, it is beyond our focus to develop a complete theory here. The following questions remain open: whether a weak solution in Definition \ref{def:weak-sol} always corresponds to the stochastic process, and whether it has classical regularity at the boundaries. To the best of our knowledge, while various a priori estimates have been obtained and some roadmaps are outlined in \cite{perthame2013voltage}, there has not been a precise definition for a solution to \eqref{eq:FP-PDE}, let alone a proof for the well-posedness. These questions might be subtle for kinetic equations with boundaries; see also \cite{hwang2014fokker,lelievre2020probabilistic} for the classical kinetic Fokker-Planck equation. 

 Beyond the exponential convergence, the nonlinear version of \eqref{eq:FP-PDE} exhibits various phenomena including periodic solutions \cite{caceres2011numerical}, for which our knowledge is limited. Our results give a better understanding for the linear regime, which might serve as a preparation towards rigorously analyzing the nonlinear dynamics. In particular, it might be interesting to see whether the probabilistic reformulation here can be extended to the nonlinear problem and help understand the dynamics.

\section*{Acknowledgement}

The work of Z.Zhou is partially supported by the National Key R\&D Program
of China (Project No. 2021YFA1001200, 2020YFA0712000), and the National Natural Science Foundation of China (Grant No. 12031013, 12171013). W.Xu acknowledges support from National Science Foundation China through grant no.12171008. 
X.Dou and Z.Zhou thank Beno\^ \i t Perthame and Delphine Salort for helpful discussions. 

\begin{appendices}

\section{H\"ormander's theorem}\label{sc:hormander}

In this section, we will briefly introduce H\"ormander's theorem and its applications. H\"ormander's theorem is a powerful tool to verify the hypoellipiticity of differential operators. First we introduce the notion of hypoellipticity.

\begin{definition}
    A differential operator $\mathcal{A}$ is said to be hypoelliptic in a domain $\Omega\subset\mathbb{R}^{n}$ if, $\mathcal{A}u\in \mathcal{C}^{\infty}(U)$ implies $u\in\mathcal{C}^{\infty}(U)$ for every open set $U\subseteq\Omega$.  
\end{definition}
In \cite{hormander1967hypoelliptic}, H\"ormander provided a sufficient condition of the coefficients of the differential operator for hypoellipticity, and this condition is called H\"ormander's condition. To formulate H\"ormander's condition, we recall the Lie bracket between two $\mathcal{C}^{\infty}$ vector fields $V$ and $W$ defined on $\mathbb{R}^n$. The Lie bracket $[V,W]$ is a new vector field given by
\begin{equation*}
    [V,W](x):= DV(x)W(x) - V(x)DW(x),
\end{equation*}
where $DV$ is the derivative matrix of $V$ given by $(DV)_{ij} := \partial_j V_i$. Then H\"ormander's condition can be formulated as follows.

\begin{definition}
    Let $A_0,A_1,\cdots,A_m$ be $\mathcal{C}^{\infty}$ vector fields on $\mathbb{R}^n$. They are said to satisfy H\"ormander's condition in a domain $\Omega\subset\mathbb{R}^n$ if, for every $x\in \Omega$, the vector fields
    \begin{equation*}
        A_i(x)\,(0\leq i \leq m),\; [A_i,A_j](x)\,(0\leq i,j \leq m),\;[\,[A_i,A_j],A_k](x)\,(0\leq i,j,k \leq m),\cdots
    \end{equation*} 
    span $\mathbb{R}^n$.
\end{definition}
Now we are ready to state H\"ormander's theorem given in \cite{hormander1967hypoelliptic}.
\begin{theorem} \label{thm:hormander}
    Consider the differential operator $\mathcal{A}$ defined on $\mathbb{R}^n$ of the form
    \begin{equation}\label{e:operator}
        \mathcal{A}:= \mathcal{A}_0 + \sum_{i=1}^{m} \mathcal{A}_i^2,
    \end{equation}
    where the operators $\mathcal{A}_i(0\leq i\leq m)$ are given by $\mathcal{A}_i:= A_i\cdot\nabla$. Here $A_0,\,A_1,\cdots,A_m$ are $\mathcal{C}^{\infty}$ vector fields on $\mathbb{R}^n$. If $\{A_i\}_{0\leq i\leq m}$ satisfy H\"ormander's condition in a domain $\Omega\subset\mathbb{R}^n$, then the operator $\mathcal{A}$ is hypoelliptic in $\Omega$.
\end{theorem}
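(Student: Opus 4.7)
The plan is to prove H\"ormander's theorem via subelliptic estimates, as in \cite{hormander1967hypoelliptic} or via Kohn's more elementary integration-by-parts version. The central goal is to establish the a priori estimate
\begin{equation*}
    \|u\|_{H^{s+\varepsilon}}^2 \leq C\bigl(\|\mathcal{A}u\|_{H^{s}}^2 + \|u\|_{H^{s}}^2\bigr),
\end{equation*}
valid locally in $\Omega$ for $u \in \mathcal{C}^{\infty}_{c}$, with some gain $\varepsilon > 0$ determined by the maximum depth $r$ of Lie brackets needed to span $\mathbb{R}^n$ at a given point (typically $\varepsilon = 2^{-r}$). Once such a subelliptic estimate is in hand, hypoellipticity follows by a standard Friedrichs mollifier and localization argument: applying the estimate to mollified $u$ and bootstrapping in $s$ upgrades any distributional solution of $\mathcal{A}u \in \mathcal{C}^{\infty}(U)$ to $u \in \mathcal{C}^{\infty}(U)$.

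First I would establish the base $L^2$ control. Pairing $\mathcal{A}u$ with $u$ in $L^2$ and integrating by parts, the $\mathcal{A}_i^2$ pieces contribute $-\|\mathcal{A}_i u\|_{L^2}^2$ plus lower-order terms (since $\mathcal{A}_i^* = -\mathcal{A}_i$ modulo a bounded multiplier), while the drift $\mathcal{A}_0$ contributes only $O(\|u\|_{L^2}^2)$ after integrating the symmetric part. This yields
\begin{equation*}
    \sum_{i=1}^m \|\mathcal{A}_i u\|_{L^2}^2 \leq C\bigl(|\langle \mathcal{A}u,u\rangle| + \|u\|_{L^2}^2\bigr),
\end{equation*}
providing $L^2$ control along the direct vector fields. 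Next I would recover control on commutators. Via pseudo-differential composition with an operator of order $-1/2$ and testing against suitable auxiliary functions, one obtains
\begin{equation*}
    \|[\mathcal{A}_i,\mathcal{A}_j]u\|_{H^{-1/2}}^2 \leq C\bigl(\|\mathcal{A}_i u\|_{L^2}^2 + \|\mathcal{A}_j u\|_{L^2}^2 + \|u\|_{L^2}^2\bigr),
\end{equation*}
so a single bracket costs one half of a derivative. Iterating, brackets of depth $r$ cost $2^{-r}$ of regularity. Since H\"ormander's condition furnishes a spanning family of brackets of bounded depth at each $x_0 \in \Omega$, summing these estimates gives control on every direction of $\mathbb{R}^n$ with net gain $\varepsilon = 2^{-r}$, producing the subelliptic estimate.

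The main obstacle is the careful bookkeeping of fractional Sobolev losses across iterated commutator manipulations and the absorption of the low-order remainders that inevitably appear. This is where the symbolic calculus of pseudo-differential operators — sharp G\aa rding, Calder\'on--Vaillancourt, commutator identities modulo lower order — is really needed, or else one must substitute it with the more hands-on tangential Friedrichs mollifiers of Kohn's approach. A further subtlety is that the a priori estimate, initially derived for smooth test functions, must extend to arbitrary distributional solutions via non-isotropic mollifiers that are compatible enough with each $A_i$ not to destroy the bracket structure during regularization. An alternative path is the probabilistic proof via Malliavin calculus (invertibility of the Malliavin covariance matrix under H\"ormander's condition), which bypasses pseudo-differential machinery but demands a substantial Wiener-space setup — rather heavy for what is used here only as a black-box appendix statement.
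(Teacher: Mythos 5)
The paper itself offers no proof of Theorem~\ref{thm:hormander}: it is stated in the appendix purely as a black-box classical result with a citation to \cite{hormander1967hypoelliptic}, and is only invoked in Lemma~\ref{lem:hypoelliptic} to verify hypoellipticity of $\partial_t+\mathcal{L}$ and $\partial_t-\mathcal{L}^*$. Your outline correctly reproduces the architecture of the standard analytic proof: the base $L^2$ identity extracting $\sum_i\|\mathcal{A}_iu\|_{L^2}^2$ from $\langle\mathcal{A}u,u\rangle$ (using that each $\mathcal{A}_i$ is skew-adjoint modulo a bounded multiplier), the half-derivative loss per commutator, iteration to brackets of bounded depth to obtain a subelliptic gain, and the localization/mollifier argument that upgrades the a priori estimate on $\mathcal{C}^\infty_c$ functions to genuine hypoellipticity for distributional solutions. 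Your identification of the two real obstacles — controlling the fractional Sobolev losses across iterated commutators via pseudo-differential calculus (or Kohn's tangential mollifiers) and choosing regularizers compatible with the vector fields so that the bracket structure survives mollification — is accurate and is precisely where the substance of Hörmander's and Kohn's arguments lies. What you have written is a roadmap rather than a proof: the subelliptic estimate and the commutator estimate in $H^{-1/2}$ are asserted with the correct form and orders but not derived, and filling them in is essentially the whole content of the original paper. Given that the paper treats the theorem as a citation, this level of detail is appropriate in context; just be explicit that the two displayed estimates are the theorems to be proved, not lemmas available for free.
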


For elliptic operator $\mathcal{A}$ of the form \eqref{e:operator}, it is easy to see that $\{A_i\}_{0\leq i\leq m}$ satisfy H\"ormander's condition. Hence, H\"ormander's condition can be viewed as a non-degeneracy condition to generalize the ellipticity for differential operators. As an example, we apply H\"ormander's theorem to our model to verify the hypoellipticity of $\partial_t + \mathcal{L}$ and $\partial_t-\mathcal{L}^*$, where the operators $\mathcal{L}$ and $\mathcal{L}^*$ are given by \eqref{e:L}.

\begin{lemma} \label{lem:hypoelliptic}
	The operators $\partial_t + \mathcal{L}, \partial_t - \mathcal{L}^*$ are hypoelliptic in $\mathbb{R}^+\times(0,V_E)\times\mathbb{R}^+$.
\end{lemma}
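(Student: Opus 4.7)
The plan is to apply Hörmander's theorem (Theorem~\ref{thm:hormander}) directly. Working in $\mathbb{R}^3$ with coordinates $(t,v,g)$, I would write $\partial_t + \mathcal{L}$ in the Hörmander form $\mathcal{A}_0 + \mathcal{A}_1^2$ by taking the smooth vector fields
\begin{equation*}
    A_0(t,v,g) := (1,\,J(v,g),\,1-g), \qquad A_1 := (0,0,1),
\end{equation*}
so that $\mathcal{A}_0 = A_0\cdot\nabla = \partial_t + J\partial_v + (1-g)\partial_g$ and $\mathcal{A}_1 = A_1\cdot\nabla = \partial_g$, giving $\mathcal{A}_1^2 = \partial_{gg}$.

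Next, I would verify Hörmander's condition by computing a single Lie bracket. Since $\partial_g J = V_E - v$, a direct calculation yields (up to the sign convention in the appendix) $[A_0,A_1] = (0,\,v-V_E,\,1)$. At every point of the domain $\mathbb{R}^+\times(0,V_E)\times\mathbb{R}^+$, the three vectors $A_0,\,A_1,\,[A_0,A_1]$ are the columns of a lower-triangular matrix with diagonal entries $1,\,v-V_E,\,1$; since $v<V_E$ throughout the domain, this determinant is nonzero, so these three vectors already span $\mathbb{R}^3$. Theorem~\ref{thm:hormander} then gives hypoellipticity of $\partial_t + \mathcal{L}$ on the prescribed domain.

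For $\partial_t - \mathcal{L}^*$, I would observe that its formal adjoint on $(t,v,g)$-space is $-(\partial_t + \mathcal{L})$. Since hypoellipticity is preserved under multiplication by a nonzero constant and under passing to formal adjoints, the assertion for $\partial_t - \mathcal{L}^*$ follows immediately from the first part. Alternatively, a direct expansion gives
\begin{equation*}
    \partial_t - \mathcal{L}^* = \partial_t - \partial_{gg} + (1-g)\partial_g + J\partial_v + c(v,g),
\end{equation*}
where $c(v,g) = g_L + g - 1$ is smooth; this is the same as $-(\mathcal{A}_0 + \mathcal{A}_1^2)$ up to the smooth zeroth-order term $c$. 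The pair $\{-A_0,A_1\}$ generates the same Lie algebra as $\{A_0,A_1\}$, so the Hörmander bracket condition still holds, and a smooth zeroth-order term does not affect hypoellipticity (a standard extension of Hörmander's theorem).

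The calculation is short and no step looks like a serious obstacle. The only point requiring care is the domain restriction $v\in(0,V_E)$, which is exactly what guarantees $\partial_g J = V_E - v \neq 0$ so that the missing $v$-direction is recovered from the single commutator $[A_0,A_1]$; one has to remember to state the lemma on this restricted domain rather than on all of $\mathbb{R}^+\times\mathbb{R}\times\mathbb{R}^+$.
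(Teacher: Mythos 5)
Your argument for $\partial_t+\mathcal{L}$ is exactly the paper's: the same decomposition into $\mathcal{A}_0+\mathcal{A}_1^2$, the same single commutator $[A_0,A_1]$, and the same observation that $\partial_g J=V_E-v\neq 0$ on $v\in(0,V_E)$ makes $\{A_0,A_1,[A_0,A_1]\}$ span $\mathbb{R}^3$ (the paper writes $(g-1)\partial_g$ in $\mathcal{A}_0$ where you write $(1-g)\partial_g$; yours matches $\mathcal{L}$, and the sign is immaterial for the span). For $\partial_t-\mathcal{L}^*$ the paper only says the argument is analogous, so your direct expansion is the more complete route, but two points deserve care. First, the general principle you invoke — that hypoellipticity is preserved under passing to formal adjoints — is \emph{false} as a general statement (there are classical variable-coefficient counterexamples); it holds here only because the adjoint is again a H\"ormander sum-of-squares operator whose bracket-generated Lie algebra is the same, which is precisely what your second, direct argument establishes, so you should rely on that alone. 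Second, in the direct argument the bookkeeping is slightly off: one finds $\partial_t-\mathcal{L}^*=\mathcal{A}_0-\mathcal{A}_1^2+c$ (note the minus sign in front of the square), so H\"ormander's theorem is applied to $-(\partial_t-\mathcal{L}^*)=(-\mathcal{A}_0)+\mathcal{A}_1^2-c$, and your remark that $\{-A_0,A_1\}$ generates the same Lie algebra as $\{A_0,A_1\}$ is exactly the point; also $c=-(1+g_L+g)$ rather than $g_L+g-1$, though as you note the zeroth-order term is irrelevant to hypoellipticity.
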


\begin{proof}
	We write the operator $\partial_t + \mathcal{L}$ as the form $\mathcal{A}_0+ \mathcal{A}_1^2$, where
	\begin{equation*}
		\mathcal{A}_0:=\partial_{t}+J\partial_{v} + (g-1)\partial_{g},\qquad
        \mathcal{A}_1:= \partial_{g}.
	\end{equation*}
	Let $A_0=(1,J(v,g),g-1)$ and $A_1=(0,0,1)$, we have $\mathcal{A}_i = A_i\cdot\nabla$ for $i=0,1$. Note that the Lie bracket $[A_0,A_1] = (0,V_E-v,1)$. It can be checked that the vector fields $A_0,A_1$ and $[A_0,A_1]$ span $\mathbb{R}^3$ at $(t,v,g)\in\mathbb{R}^+\times(0,V_E)\times\mathbb{R}^+$. Therefore, $A_0,A_1$ satisfy H\"ormander's condition. By Theorem~\ref{thm:hormander}, we obtain the hypoellipticity of $\partial_t + \mathcal{L}$.
	The hypoellipticity of $\partial_t - \mathcal{L}^*$ can be treated in the same way and we omit the details.
\end{proof}

\end{appendices}

\bibliography{bib_torus.bib}
\bibliographystyle{abbrv}

\end{document}